\newcommand{\rnc}{\renewcommand}
\newcommand{\nc}{\newcommand}
\newcommand{\mrm}{\mathrm}
\renewcommand{\b}{\textbf}
\newcommand{\bs}{\boldsymbol}
\nc{\mb}{\mathbb}
\nc{\mac}{\mathcal}
\nc{\E}{\mb{E}}
\nc{\N}{\mb{N}}
\nc{\R}{\mb{R}}
\nc{\Q}{\mb{Q}}
\rnc{\P}{\mrm P}
\rnc{\d}{\mrm d}
\nc{\C}{\mac{C}}
\nc{\D}{\mac{D}}
\nc{\B}{\mac{B}}
\nc{\oPo}{\stackrel{p}{\longrightarrow}}
\nc{\oWo}{\stackrel{w}{\longrightarrow}}
\nc{\oDo}{\stackrel{d}{\longrightarrow}}
\newcommand{\dist}{\mbox{$\, \stackrel{d}{\longrightarrow} \,$}}
\newcommand{\prob}{\mbox{$\, \stackrel{p}{\longrightarrow} \,$}}
\renewcommand{\qed}{\mbox{ } \hfill $\Box$\\ }
\newcommand{\bay}{\begin{array}}
\newcommand{\eay}{\end{array}}
\newcommand{\bqa}{\begin{eqnarray*}}
\newcommand{\eqa}{\end{eqnarray*}}
\newcommand{\bee}{\begin{eqnarray*}}
\newcommand{\eee}{\end{eqnarray*}}
\newcommand{\bea}{\begin{eqnarray*}}
\newcommand{\eea}{\end{eqnarray*}}
\newcommand{\bqan}{\begin{eqnarray}}
\newcommand{\eqan}{\end{eqnarray}}
\newcommand{\be}{\begin{eqnarray}}
\newcommand{\ee}{\end{eqnarray}}
\newcommand{\bit}{\begin{itemize}}
\newcommand{\eit}{\end{itemize}}
\newcommand{\ben}{\begin{enumerate}}
\newcommand{\een}{\end{enumerate}}
\newcommand{\beq}{\begin{equation}}
\newcommand{\eeq}{\end{equation}}
\newcommand{\bdes}{\begin{description}}
\newcommand{\edes}{\end{description}}
\newcommand{\btb}{\begin{tabular}}
\newcommand{\etb}{\end{tabular}}
\newcommand{\bcen}{\begin{center}}
\newcommand{\ecen}{\end{center}}
\newcommand{\bmp}{\begin{minipage}}
\newcommand{\emp}{\end{minipage}}
\newcommand{\var}{\operatorname{{\it Var}}}
\newcommand{\vlambda}{\mbox{\boldmath $\lambda$}}
\newcommand{\vN}{\boldsymbol{N}}
\newtheorem{definition}{{\sc Definition}\sc}[section]
\newcommand{\bdefi}{\begin{definition}}
\newcommand{\edefi}{\end{definition}}
\newtheorem{appropr}[definition]{{\sc Approximation Procedure}\sc}
\newcommand{\bappr}{\begin{appropr}}
\newcommand{\eappr}{\end{appropr}}
\newtheorem{bedi}[definition]{{\sc Condition}\sc}
\newcommand{\bbd}{\begin{bedi}}
\newcommand{\ebd}{\end{bedi}}
\newtheorem{bedin}[definition]{{\sc Conditions}\sc}
\newcommand{\bbdn}{\begin{bedin}}
\newcommand{\ebdn}{\end{bedin}}
\newtheorem{corollary}[definition]{{\sc Corollary}\sc}
\newcommand{\bco}{\begin{corollary}}
\newcommand{\eco}{\end{corollary}}
\newtheorem{lemma}[definition]{{\sc Lemma}\sc}
\newcommand{\blem}{\begin{lemma}}
\newcommand{\elem}{\end{lemma}}
\newtheorem{proposition}[definition]{{\sc Proposition}\sc}
\newcommand{\bpro}{\begin{proposition}}
\newcommand{\epro}{\end{proposition}}
\newtheorem{satz}[definition]{{\sc Theorem}\sc}
\newcommand{\bsa}{\begin{satz}}
\newcommand{\esa}{\end{satz}}
\newtheorem{assumption}[definition]{{\sc Assumption}\sc}
\newcommand{\bas}{\begin{assumption}}
\newcommand{\eas}{\end{assumption}}
\newtheorem{assumptions}[definition]{{\sc Assumptions}\sc}
\newcommand{\bass}{\begin{assumptions}}
\newcommand{\eass}{\end{assumptions}}
\newtheorem{abb}{{\sc Figure}\sc}
\newcommand{\babb}{\begin{abb}}
\newcommand{\eabb}{\end{abb}}
\newenvironment{remark}{\begin{rmk}\sl}{\end{rmk}}
\newtheorem{rmk}{{\sc Remark}\sc}[section]
\newcommand{\brem}{\begin{remark}}
\newcommand{\erem}{\end{remark}}
\newenvironment{remarks}{\begin{rmks}\sl}{\end{rmks}}
\newtheorem{rmks}{{\sc Remarks}\sc}[section]
\newcommand{\brems}{\begin{remarks}}
\newcommand{\erems}{\end{remarks}}
\newenvironment{example}{\begin{exmp}\rm}{\end{exmp}}
\newtheorem{exmp}{{\sc Example}\sc}[section]
\newcommand{\bbsp}{\begin{example}}
\newcommand{\ebsp}{\end{example}}
\newcommand{\bexa}{\begin{example}}
\newcommand{\eexa}{\end{example}}
\newtheorem{model}{{\sc Model}\sc}[section]
\newcommand{\bmdl}{\begin{model}}
\newcommand{\emdl}{\end{model}}
\newtheorem{scheme}{{\sc Scheme}\sc}[section]
\newcommand{\bscm}{\begin{scheme}}
\newcommand{\escm}{\end{scheme}}
\newenvironment{tabelle}{\begin{tabl}\sl}{\end{tabl}}
\newtheorem{tabl}{{\sc Table}\sc}
\newcommand{\btab}{\begin{tabelle}}
\newcommand{\etab}{\end{tabelle}}
\newenvironment{exercise}{\begin{exc}\sl}{\end{exc}}
\newtheorem{exc}{Exercise}[section]
\newcommand{\bexe}{\begin{exercise}}
\newcommand{\eexe}{\end{exercise}}
\numberwithin{equation}{section}
\rnc{\var}{\textnormal{var}}
\newtheorem{thm}{Theorem}
\newtheorem{rem}{Remark}
\newtheorem{exs}{Examples}
\begin{document}

    
\title{\bf Non-strange Weird Resampling for Complex Survival Data}
    
\author{Dennis Dobler$^{*,1}$ and Jan Beyersmann$^{*}$ and Markus Pauly$^{*}$ 
}
\maketitle

\begin{abstract}

  This paper introduces the new data-dependent multiplier bootstrap for
  non-parametric analysis of survival data, possibly subject to competing
  risks. The new resampling procedure includes both the general wild bootstrap
  and the weird bootstrap as special cases. The data may be subject to
  independent right-censoring and left-truncation. We rigorously prove
  asymptotic correctness which has in particular been pending for the weird
  bootstrap. As a consequence, pointwise as well as time-simultaneous inference procedures
  for, amongst others, the classical survival setting are deduced.
  We report simulation results and a real data analysis of the
  cumulative cardiovascular event probability. The simulation results suggest
  that both the weird bootstrap and use of non-standard multipliers in the
  wild bootstrap may perform preferably.
  
\end{abstract}

\noindent{\bf Keywords:} Aalen-Johansen estimator; Confidence bands; Counting
processes; Cram\'{e}r-von Mises; Cumulative incidence function; Kaplan-Meier
estimator; Kolmogorov-Smirnov

\vfill
\vfill

\noindent${}^{*}$ {University of Ulm, Institute of Statistics, Germany}\\
\noindent${}^{1}$ {Corresponding author's email: dennis.dobler@uni-ulm.de}

\newpage

\section{Introduction}
Non-parametric inference for time-to-event data is often hindered by
asymptotic non-pivotality. The Nelson-Aalen (NAE) and Aalen-Johansen
estimators (AJE) converge weakly to Gaussian processes on a Skorohod space,
but their covariance functions depend on unknown quantities. This problem is
attacked by plug-in of estimates and, in the absence of competing risks,
transformation of the limit distribution towards a Brownian bridge
\citep[e.g.,\ ][Section~IV.1.3.2]{abgk93}. The latter approach, however, fails
if interest lies in the cumulative event probability of a competing risk, and
resampling techniques are needed. Moreover, even if asymptotic pivotal approximations are
available, resampling is well known to often perform advantageously in small
samples.

In an i.i.d.\ setting, resampling typically uses the classical bootstrap of
\cite{efron79}, extended to the Kaplan-Meier estimator by repeatedly taking
random samples with replacement from the randomly censored observations
\citep{efron81}. Theoretical justifications were provided by \cite{akritas86}
and \cite{lo86}. The latter authors also suggested that their method of proof
extends to the situation of competing risks. In an article about weak
convergence for quantile processes and their bootstrap versions, \cite{doss92}
briefly discussed resampling inference for the latent failure time of a
competing risk.

Another popular resampling method traces back to \cite{lin97}, see also the
textbook treatment by \cite{Mart:Sche:dyna:2006}. Lin's idea was to consider
the martingale representation of the AJE which originates from the Doob-Meyer
decomposition for counting processes and does not necessarily require an
i.i.d.\ setup \citep[e.g.,\ ][]{abgk93}. Lin suggested to replace the
martingale increments by the increments of the observed counting processes,
reweighted by standard normal variates. The approach has recently been
recognized as a special case of the wild bootstrap \citep[e.g.,\
][]{beyersmann13}, where the weights are required to have mean zero and
variance one, but need not be normal.

Yet another, earlier suggestion for a general resampling procedure for
time-to-event data is the \emph{weird bootstrap} due to
\citet[Sec.~IV.1.4]{abgk93}, but it has slightly fallen into
oblivion. Andersen et al.\ formulated their ideas for approximating the
empirical distribution of the standardized NAE, say
$V_n=\sqrt{n}(\widehat{A}-A)$. Since the counting process increments ${\rm d}N(t)$
that enter the NAE~$\widehat{A}(t)$ have the same conditional variance as
$B(Y(t),{\rm d}A(t))$-distributed binomial random variables, given the risk
set~$Y(t)$ at $t-$, it seems natural to consider a corresponding {\it weird
  jump process} $N^*$ with independent and
$B(Y(t),{\rm d}\widehat{A}(t))$-distributed increments at the jump times of $N$.
This results in a so-called weird bootstrap NAE version $\widehat
V_n=\sqrt{n}(\widehat{A}^*-\widehat{A}) =\sqrt{n} \int 1/{Y} ({\rm d}N^*-{\rm d}N)$. At
first sight, this bootstrap is `weird' in that the number at risk is not
changed in the bootstrap step and thus each individual may cause several
simulated events. At second sight, however, the weird bootstrap is a very natural approach as discussed in Section~\ref{sec: dis} below. 
Andersen et al.\ sketched a theoretical
justification for weird bootstrapping the NAE, but --- as also \citet[p.~38]{freitag00} pointed
out --- a rigorous proof has not been given.

Although the weird bootstrap has been implemented in the functions
\emph{censboot} and \emph{coxreg} of the \verb=R= packages \verb=boot= and
\verb=eha=, respectively, \citep[for the latter, see Appendix D.2
of][]{brostrom12}, Efron's bootstrap and the wild bootstrap with standard
normal weights are the most popular resampling schemes in the survival
literature. Exceptions using the weird bootstrap are \cite{dudek08} and
\cite{fledelius04}. Dudek et al.\ empirically found superiority of some weird
bootstrap confidence bands for the cumulative hazard rate compared to using
Efron's approach. Fledelius et al.\ studied residual lifetimes and proposed
the weird bootstrap for a kernel density estimator of the hazard rate. These
authors accounted for both the age of individuals under study and calendar
time, leading to a two-dimensional time parameter.  Weak convergence is shown
for arbitrary single points of time, but not time-simultaneously, yielding
confidence intervals rather than confidence bands. For another brief textbook treatment of the weird bootstrap, see also \citet[Sections 3.5 and 7.3]{davison97}.

The aim of this paper is to introduce and rigorously justify a new resampling
procedure, the \emph{data-dependent multiplier bootstrap} (DDMB) for
non-parametric analysis of survival data, possibly subject to competing risks,
that includes both the general wild bootstrap and the weird bootstrap as
special cases. The data are assumed to be subject to independent
right-censoring and left-truncation, but a strict i.i.d.\ setup is not
required. (In fact, all that is really needed is the multiplicative intensity
model, see, e.g., \citet[Section 3.1.2]{aalen08}).
As a byproduct, our development includes a rigorous proof for the original weird bootstrap. In
contrast to the classical wild bootstrap, the new procedure allows for both
non-i.i.d.\ weights and data-dependent weights. Expressing the weird
bootstrap as a DDMB, the corresponding multipliers approximately correspond to
independent $Poi(1)$ variates for large numbers of individuals at risk,
arriving at a special wild bootstrap version as studied
in~\cite{beyersmann13}.

For ease of presentation, we formulate our developments for the AJE of the
cumulative incidence functions (CIFs) in a competing risks setting. This
includes the standard survival scenario in which there is only one event
type. For applications of the DDMB, we study the testing problem of
equality of CIFs from two independent groups \citep[see also,
e.g.,][]{bajorunaite07, bajorunaite08,dobler14a, dobler14b}, and we construct
asymptotically valid confidence bands for CIFs \citep[see also,
e.g.,][]{lin97,beyersmann13}.

This article is organized as follows. Section~\ref{sec:model} recaps the
properties of the competing risks model under consideration, the quantity of
interest (the CIF) and its canonical estimators. The DDMB and its special forms are
introduced and analyzed in Section~\ref{sec:ddmb} and applications for the
two-sample testing problem as well as for
time-simultaneous confidence bands are given in Section~\ref{sec:tests}.  Small
sample performance of confidence bands is assessed in a simulation study in
Section~\ref{sec: simus}. The simulation setup has been chosen similar to a
randomized clinical trial on cardiovascular events in diabetes patients
\citep{wanner05}, and real data from this trial are then analyzed in
Section~\ref{sec: example}.  Finally, we give some concluding remarks in
Section~\ref{sec: dis}. All proofs are deferred to the Appendix.

\section{Notation, Model and Estimators}
\label{sec:model}

The ordinary survival setup is generalized to a {\it competing risks process} $(X_t)_{t\geq 0}$ 
with $m\in\mathbb{N}$ competing risks.
This is a non-homogeneous Markov process with state space $\{0,1,\dots,m\}$
and initial state $0$, i.e., ${P}(X_0=0)=1$.
All other states~$1,\dots,m$ represent absorbing competing risks. 
For ease of notation we only discuss the case $m=2$ with two absorbing states since generalizations to $m\geq3$ are obvious. 
The {\it event time} ${T}=\inf\{t>0 \mid\ X_t\neq 0\}$ is assumed to be finite a.s.. 
The process behaviour is regulated by the transition intensities (or cause-specific hazard functions) 
between states $0$ and $j=1,2,$ denoted by
\begin{equation}
\label{eq:alpha}
  \alpha_{j}(t) = \lim_{\Delta t \searrow 0}\frac{{P}({T}\in [t, t + \Delta t), X_{{T}}=j\,\mid\, {T} \geq t)}{\Delta t},\ \, j=1,2.
\end{equation}
Throughout we assume that  $\alpha_{1}$ and $\alpha_{2}$ exist. 
One is often interested in the development of the competing risks process in time on a given compact interval $I \subseteq [0,\tau)$. 
Here $\tau$ is an arbitrary terminal time such that $\tau \le \sup\{u: \int_0^u(\alpha_{1}(s)+\alpha_{2}(s)) {\rm d} s < \infty \}$ 
whence ${P}({T}> \cdot)>0$ on $[0,\tau)$. 
For a detailed motivation and more practical examples for occurrences of competing risks designs 
we refer to \cite{abgk93}, \cite{Allignol_10} as well as \cite{beyersmann12}.\\

For $n$ independent replicates of the competing risks process, i.e. $n$ individuals under study, we now consider the associated bivariate counting process $\vN=(N_1,N_2)$. 
Here $N_j= \sum_{i=1}^n N_{j;i}, j=1,2,$ with
\begin{equation}
\label{eq:n}
  N_{j;i}(t)=\mathbf{1}\left(\text{ Subject i has an observed } (0 \rightarrow j) \text{ -- transition in } [0,t]\right),
\end{equation}
counts the number of observed transitions into state~$j \in \{1,2\}$, where $\mathbf{1}(\cdot)$ denotes the indicator function. 
As usual, it is postulated that the processes $N_1$ and $N_2$ are c\`adl\`ag and do not jump simultaneously. 
Moreover, we assume that $\vN$ fulfills the multiplicative intensity model given in~\cite{abgk93},
i.e., its intensity process $\vlambda=(\lambda_1,\lambda_2)$ is given by
\begin{equation}
\label{eq:mult_inten}
  \lambda_j(t) =  Y(t) \alpha_{j}(t),\quad j=1,2.
\end{equation}
Here $Y= \sum_{i=1}^n Y_i$ and 
\begin{equation}
 \label{eq:y}
Y_i(t)= \mathbf{1}\left( \text{ Subject i is in state } 0  \text{ at time } t- \right),
\end{equation}
i.e. $Y$ counts the number at risk immediately before time $t$. 
It is worth to note that the multiplicative intensity model holds, for instance, in the context of independent right-censoring or left-truncation; 
see Chapter~III and IV in~\cite{abgk93}. 
Moreover, even different censoring distributions are possible; see Example~IV.1.6 in the same textbook.
For the explicit modelling of these incomplete observations in various settings we again refer to the monograph of~\cite{abgk93}.
{\color{black}
Other kinds of multiplicative intensity models are also conceivable in combination with the present theory
such that the number at risk process $Y$ may be replaced with a more general predictable process depending on the model that describes the data;
see the examples in Section 3.1.2 of \cite{aalen08}.}

We are now interested in the cumulative incidence functions (CIFs), or sub-distribution functions, given by
$$
  F_j(t) = {P}({T}\leq t, X_{ T}=j)=\int_{0}^{t}{P}({T}> u-)\alpha_{j}(u){\rm d}u, \quad j=1, 2,
$$
which depend only on the cause-specific transition intensities $\alpha_1$ and $\alpha_2$. 
The corresponding sub-survival functions are denoted $S_j(t)=1-F_j(t)$, $j=1,2$ and
the Aalen-Johansen estimators for the CIFs are
\begin{equation}
\label{eq:AJ}
\widehat{F}_j(t) = \int_{0}^{t}\frac{\widehat{{P}}(T>u-)}{Y(u)} J(u) \ {\rm d}N_{j}(u), \quad j=1,2.
\end{equation}
Here $J(u)=\mathbf{1}(Y(u)>0)$ (such that $\frac00 := 0$) and $\widehat{{P}}(T>u)$ denotes the Kaplan-Meier estimator. 
As above we denote the estimator for the sub-survival function by $\widehat{S}_j(t) =  1 - \widehat{F}_j(t)$. 
Note that the usual survival scenario is obtained by letting $\alpha_2 \equiv N_2 \equiv 0$
so that $\widehat S_1(t)$ reduces to the Kaplan-Meier estimator.

Simultaneous confidence bands for a CIF, say $F_1$, are typically based on the Aalen-Johansen process via
\begin{equation}
\label{eq:w}
  W_n(\cdot) = n^{1/2}\{\widehat{F}_1(\cdot)-{F}_1(\cdot)\}.
\end{equation}
Under the following throughout assumed regularity assumption (where $y: I \rightarrow \mathbb{R}$ is a deterministic function)
\bqan
\label{eq:cond_beyersmann}
 \sup_{u \in I} \left| \frac{Y(u)}{n} - y(u) \right| \prob 0 \quad \text{with} \quad \inf_{u \in I} y(u) > 0,
\eqan
$W_n$ converges in distribution on the Skorohod space $\mathcal{D}(I)$ to a zero-mean Gaussian process $U$; 
see e.g. Theorem~IV.4.2 in~\cite{abgk93}. 
Here and throughout the paper, ``$\prob$'' denotes convergence in probability, whereas ``$\dist$'' stands for convergence in distribution as $n \rightarrow \infty$. 
In particular, we have 
\begin{equation}\label{eq:weakconv}
 W_n \dist U \quad \text{on}\quad \mathcal{D}(I),
\end{equation}
where $U$ is a  zero-mean Gaussian process with covariance function given by
\begin{eqnarray}\label{eq:zeta}
 \zeta({s_1},{s_2})&=&\int_0^{s_1} \frac{\{S_2(u)-F_1({s_2})\}\{S_2(u)-F_1(s_1)\} \alpha_{1}(u)}{y(u)} {\rm d} u\nonumber\\
   &+& \int_0^{s_1} \frac{\{F_1(u)-F_1({s_2})\}\{F_1(u)-F_1({s_1})\} \alpha_{2}(u)}{y(u)} {\rm d} u\label{eq:cov}
\end{eqnarray}
for $s_1<s_2$. This martingale-based weak convergence result follows from the representation
\begin{eqnarray}\label{eq:martingale rep}
  W_n(t) =  \sqrt{n} \sum_{i=1}^{n}\Big( \int_{0}^{t}\frac{S_2(u) - F_1(t)}{{Y}(u)}{\rm d}{M}_{1; i}(u)
  {+ \int_{0}^{t}\frac{F_1(u) - F_1(t)}{{Y}(u)}}{\rm d}{M}_{2;i}(u)\Big)+o_{p}(1),
\end{eqnarray}
where for $1\leq i\leq n, j=1,2,$
\begin{equation*}
  M_{j;i}(s) = N_{j;i}(s) - \int_0^s Y_i(u) \alpha_{j}(u)\, {\rm d}u
\end{equation*}
are square integrable martingales. 
For ease of notation the dependency on $n$ and the appearance of the indicator $J(u)$ is suppressed in both integrals in (\ref{eq:martingale rep}). 
The convergence in~\eqref{eq:weakconv} finally follows from \eqref{eq:cond_beyersmann} 
in combination with Rebolledo's martingale central limit theorem \citep[see][Theorem~II.5.1]{abgk93}.
Note, that the main assumption \eqref{eq:cond_beyersmann} is satisfied in most relevant situations, 
e.g., for right-censored and left-truncated or even filtered data; see Sections III and IV in~\cite{abgk93}.

Since the covariance function $\zeta$ is unknown and lacks independent increments, 
resampling techniques are essential for approximating the distribution $\mathcal{L}(W_n)$ of $W_n$.
Therefore, we introduce a general DDMB method.

\section{The Data-Dependent Multiplier Bootstrap}
\label{sec:ddmb}

The last mentioned covariance problem is typically attacked using a computationally convenient resampling technique which is due to 
\cite{lin93} and \cite{lin94, lin97}. 
Their idea is to replace the unobservable martingales ${M}_{j; i}$ in the representation \eqref{eq:martingale rep}
with i.i.d. standard normal variates $G_{j;i},\, i \in \mathbb{N},\,j=1,2$, 
(which are independent from the data) times the observable counting processes ${N}_{j; i}$. 
Moreover, all remaining unknown quantities in \eqref{eq:martingale rep} are replaced with their estimators.
This leads to the following resampling version of $W_n$ according to \cite{lin97}:
$$
\widehat{W}_n(t)  =  \sqrt{n} \sum_{i=1}^n \big( \int_0^t \frac{G_{1;i} (\widehat{S}_2(u) - \widehat{F}_1(t))}{Y(u)} \ {\rm d}N_{1;i}(u)
		     + \int_0^t \frac{ G_{2;i}(\widehat{F}_1(u) - \widehat{F}_1(t))}{Y(u)} \ {\rm d}N_{2;i}(u)\big);
$$
see also~\cite{beyersmann13}
where the validity of this approach is proven for the even
more general wild bootstrap with i.i.d. zero-mean random variables $G_{j;i},\, i \in \mathbb{N},\,j=1,2,$ with variance $1$ and finite fourth moments. 
This means that the conditional distribution of $\widehat{W}_n$ asymptotically coincides with that of $W_n$. 
Hence its law may be approximated via a large number of realizations, repeatedly generating i.i.d. multipliers $G_{j;i}$. 
In the following we show how to generalize this method to the case of data-dependent multiplier weights $(D_{n;i})_{i,n}$ which are only supposed to be conditionally independent given the data. 
An advantage of this approach is the possibility to weight the individual subjects in diverse ways.
For example, certain preferences (e.g. depending on the time under study) can be taken into account, specifically arriving at the weird bootstrap from~\cite{abgk93};
see Examples~\ref{ex: proc} below.
To this end we rewrite $\widehat{W}_n$ as
\begin{equation}\label{eq:wildbstat}
  \widehat{W}_n(t)  =  \sqrt{n} \sum_{i=1}^{n} \Big(G_{1;i}X_{n;1;i}(t) + G_{2;i}X_{n;2;i}(t)\Big) = \sqrt{n} \sum_{i=1}^{2n} G_{i} Z_{2n;i}(t),
\end{equation}
where for $s \in I$ and $i = 1, \dots, n$
\bqa
   X_{n;1;i}(s) = \int_0^s \frac{\widehat S_2(u) - \widehat F_1(s)}{Y(u)} J(u) \ {\rm d} N_{1;i}(u) , \qquad
   X_{n;2;i}(s) = \int_0^s \frac{\widehat F_1(u) - \widehat F_1(s)}{Y(u)} J(u) \ {\rm d} N_{2;i}(u) ,
\eqa
$G_i = G_{1;i}\mathbf 1(i \leq n) + G_{2;i-n} \mathbf 1(i > n)$ and $Z_{2n;i} := X_{n;1;i} \mathbf 1(i \leq n) + X_{n;2;i-n} \mathbf 1(i > n)$.
That is, we obtain a linear weighted representation as in \cite{dobler14a}.

Now replacing the i.i.d. weights $G_i$ in \eqref{eq:wildbstat}  with data-dependent multipliers $(D_{2n;i})_{i,n}$, 
we arrive at the so-called DDMB version of the normalized Aalen-Johansen estimator
\begin{equation}\label{eq:wildbstat b}                                                                                                                                                                      
\widehat W_n^D = \sqrt{n} \sum_{i=1}^{2n} D_{2n;i} Z_{2n;i}.
\end{equation}
These bootstrap weights also need to fulfill regularity conditions concerning their conditional moments in order to induce conditional finite-dimensional convergence and tightness.
In particular, the Conditions~\eqref{eq:conv_wbs_1}-\eqref{eq:conv_wbs_6} below guarantee the validity of this approach, i.e. 
the weak convergence on the Skorohod space $\D(I)$ to the Gaussian process $U$. 
Its proof depends on an application of Theorem~13.5 of~\cite{billingsley99} and is split up into two parts;
see Lemma~\ref{lemma:fidi_conv_general_wild_bs} and~\ref{lemma:tightness_wbs} in the Appendix. 

{\color{black}
For the purpose of applying the theory developed in this paper,
we again stress that only the multiplicative intensity model~\eqref{eq:mult_inten} and Condition~\eqref{eq:cond_beyersmann} are required.
Hence, all available information is given by the processes $t \mapsto (Y_i(t), N_{1;i}(t), N_{2;i}(t))$ for all $i=1, \dots, n$
and the $\sigma$-field containing (at least) all this information is denoted $\mac{A}_n$.
This scenario includes, for example, independent left-truncation and right-censoring
in which case we can equivalently write $\mac{A}_n = \sigma(L_i, \tilde T_i, \delta_i: i = 1, \dots, n)$.
Here $L_i$ denotes the entry time into the study for individual $i$, $\tilde T_i > L_i$ is its event or censoring time, whichever comes first, 
and $\delta_i$ indicates the type of event in case of $\delta_i \in \{1,2\}$
or a censored observation for $\delta_i = 0$.
}

Further, the product measure of (conditional) distributions $P_i, i=1,\dots,n,$ is indicated by $\bigotimes_{i=1}^n P_i$
and the notation $V_n \in \mac{O}_p(r_n)$ describes the following boundedness property in probability: 
there exists a constant $C > 0$ such that $r_n^{-1} | V_n | + o_p(1) \leq C$ a.s. for all $n$.
\begin{thm}
\label{thm:ddmb_cifs}
  Suppose that \eqref{eq:cond_beyersmann} holds and that the DDMB weights 
  $(D_{2n;i})_{i,n}$ fulfill
  \begin{align}
   \label{eq:conv_wbs_1} & \max_{1 \leq i \leq 2n} | \mu_{n;i} | := \max_{1 \leq i \leq 2n} | \E[ D_{2n;i} | \mac{A}_n ] | \in o_p(n^{-1/2}), \\
   \label{eq:conv_wbs_2} & \max_{1 \leq i \leq 2n} | \sigma_{n;i}^2 - 1 | := \max_{1 \leq i \leq 2n} | \var(D_{2n;i} | \mac{A}_n) - 1 | \in o_p(1), \\
   \label{eq:conv_wbs_4} & \max_{1 \leq i \leq 2n} \E[ D^4_{2n;i} |\mac{A}_n] \in \mac{O}_p(n), \\
   \label{eq:conv_wbs_5} & \mac{L} ( D_{2n;i}, i = 1,\dots,2n \;|\; \mac{A}_n )
    = \bigotimes_{i = 1,\dots,2n} \mac{L} (  D_{2n;i} \; | \; \mac{A}_n ).
  \end{align}
 If in addition $(D_{2n;i})_{i,n}$ satisfy the following conditional Lindeberg condition in probability given $\mac{A}_n$
   \begin{align}
   \label{eq:conv_wbs_6}
    \sum_{i=1}^n \E \Big[ \frac{(D_{2n;i} - \mu_{n;i} )^2}{\sum_{j=1}^n \sigma_{n;j}^2} 
      \textnormal{\b 1} \Big( \frac{(D_{2n;i} - \mu_{n;i})^2}{\sum_{j=1}^n \sigma_{n;j}^2} > \varepsilon \Big) \Big| \mac{A}_n \Big] \oPo 0 
      \qquad 
      \text{for all }  \varepsilon > 0,
  \end{align}
  then the DDMB version of the AJE 
  converges in distribution on the Skorohod space $\D(I)$ 
  to the Gaussian process $U$ given in \eqref{eq:weakconv}.
  I.e., given $\mac{A}_n$ we have
  \begin{align*}
    \widehat W_n^D = \sqrt{n} \sum_{i=1}^{2n} D_{2n;i} Z_{2n;i} \oDo U \quad \text{in probability}.
  \end{align*}
 \end{thm}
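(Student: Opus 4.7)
The plan is to deduce conditional weak convergence on $\D(I)$ via the two classical ingredients: (a) conditional convergence of the finite-dimensional distributions of $\widehat W_n^D$ to those of $U$, and (b) conditional tightness via a moment criterion, and then combine them along the lines of the two auxiliary lemmas announced by the authors in the Appendix.

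For the finite-dimensional convergence, fix $t_1,\dots,t_k \in I$ and $a_1,\dots,a_k \in \R$. By the Cram\'{e}r--Wold device, it suffices to show that
\begin{equation*}
L_n := \sqrt{n}\sum_{i=1}^{2n} D_{2n;i}\,\xi_{n;i}, \qquad \xi_{n;i}:=\sum_{j=1}^k a_j Z_{2n;i}(t_j),
\end{equation*}
converges conditionally in distribution (in probability) to a centred Gaussian with variance $\sum_{j,l} a_j a_l\,\zeta(t_j,t_l)$. Because of \eqref{eq:conv_wbs_5}, $L_n$ is a sum of conditionally independent summands given $\mac{A}_n$, so I would apply the conditional Lindeberg--Feller CLT. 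The conditional mean $\sqrt n\sum_i \mu_{n;i}\xi_{n;i}$ vanishes by \eqref{eq:conv_wbs_1} and Cauchy--Schwarz, once one observes that $n\sum_i \xi_{n;i}^2=\mac{O}_p(1)$. The conditional variance $n\sum_i\sigma_{n;i}^2\xi_{n;i}^2$ is reduced by \eqref{eq:conv_wbs_2} to $n\sum_i\xi_{n;i}^2$; expanding the square and collapsing the double counting-process integrals to the diagonal (since each $N_{j;i}$ has at most one jump) yields, after plugging in $dN_j/Y \approx \alpha_j \,du$ and $Y/n\to y$ via Lenglart's inequality and \eqref{eq:cond_beyersmann} and consistency of $\widehat F_1,\widehat S_2$, exactly $\sum_{j,l}a_j a_l \zeta(t_j,t_l)$. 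The Lindeberg condition for $L_n$ follows from \eqref{eq:conv_wbs_6}: on the set $\{\sup_I|Y/n-y| < \inf_I y/2\}$, which has probability tending to one, $|\sqrt{n}\xi_{n;i}|\le C/\sqrt n$, so the Lindeberg threshold for $L_n$ is dominated by the threshold in \eqref{eq:conv_wbs_6} up to constants, with $\mu_{n;i}$ absorbed via \eqref{eq:conv_wbs_1}.

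For tightness I would apply Theorem~13.5 of \cite{billingsley99}, which reduces matters to a conditional fourth-moment bound for increments of the form
\begin{equation*}
\E\bigl[(\widehat W_n^D(t)-\widehat W_n^D(s))^2(\widehat W_n^D(u)-\widehat W_n^D(t))^2 \,\big|\, \mac{A}_n\bigr] \le (F_n(u)-F_n(s))^2, \qquad s\le t\le u,
\end{equation*}
with $F_n$ a nondecreasing random function converging uniformly in probability to a deterministic continuous majorant. Expanding the fourth moment and using the conditional independence \eqref{eq:conv_wbs_5} splits the left-hand side into a product of second conditional moments plus a residual ``diagonal'' fourth-order term which, by \eqref{eq:conv_wbs_4}, carries an extra factor $1/n$ and is asymptotically negligible. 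The dominant product term, after again collapsing $dN_{j;i}(u)\,dN_{j;i}(v)$ to $u=v$, is controlled by the product of increments of a consistent estimator of the variance function of $U$, which is continuous because $\zeta$ is.

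The main obstacle I expect is the tightness step, specifically two technical points within it. First, the collapse of the double integrals to the diagonal is what turns the naive pure-jump majorant into the continuous $F_n$ required by Billingsley's criterion; this needs to be carried out cleanly in the presence of the estimators $\widehat F_1,\widehat S_2$ and the indicator $J$. Second, \eqref{eq:conv_wbs_4} is only a bound in probability, so the residual fourth-order term must be tracked through restriction to good events on which $Y/n$ is uniformly close to $y$, in order to convert boundedness in probability into the deterministic-looking inequalities Billingsley's theorem requires; the same good event is what underlies the Lindeberg step above, so both parts of the argument hinge on it.
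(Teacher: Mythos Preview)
Your approach is essentially the paper's: finite-dimensional convergence via a conditional Lindeberg CLT and tightness via Billingsley's Theorem~13.5 moment criterion, organized into two auxiliary lemmas. The paper abstracts both steps to general triangular arrays $(\xi_{n;i})$ and then invokes \cite{beyersmann13} to verify the array-specific conditions for $\xi_{n;i}=\sqrt{n}\,Z_{2n;i}$; you work directly with the $Z_{2n;i}$, which is fine.

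There is, however, a genuine oversimplification in your tightness step. Expanding the conditional fourth moment $\E[(\widehat W_n^D(t)-\widehat W_n^D(s))^2(\widehat W_n^D(u)-\widehat W_n^D(t))^2\mid\mac A_n]$ does \emph{not} produce only the ``product of second moments'' term and a single ``diagonal'' $\E[D^4]$ term: by conditional independence \eqref{eq:conv_wbs_5} the expansion is indexed by all set partitions of four indices, so you also get contributions involving $\mu_{n;i}=\E[D_{2n;i}\mid\mac A_n]$ (from blocks of size~1) and $\E[D_{2n;i}^3\mid\mac A_n]$ (from blocks of size~3). The paper's tightness lemma carries separate moment conditions for all four orders; the third-moment bound is then derived from \eqref{eq:conv_wbs_4} via Jensen's inequality with $x\mapsto x^{4/3}$, and the first-moment terms are killed by \eqref{eq:conv_wbs_1}. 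You should at least flag these extra terms and indicate how \eqref{eq:conv_wbs_1} and \eqref{eq:conv_wbs_4} (through Jensen) dispose of them.

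A second, smaller point: the diagonal $\E[D^4]$ term is not ``asymptotically negligible'' in the sense of being $o_p$ of the main term. What actually happens is that the factor $\max_i(\sqrt n\,Z_{2n;i})^2=\mac O_p(n^{-1})$ exactly cancels the $\mac O_p(n)$ from \eqref{eq:conv_wbs_4}, so the diagonal contribution is of the \emph{same} order as the product-of-variances term and must be absorbed into the Billingsley majorant $(H_n(u)-H_n(s))^2$, not discarded. The paper makes this explicit through the quantity $r_n=\max_i\max_{s,t}(\xi_{n;i}(t)-\xi_{n;i}(s))^2/\sum_j(\xi_{n;j}(t)-\xi_{n;j}(s))^2$ and the observation $n\,r_n\in\mac O_p(1)$.
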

\begin{rem}
\label{rem:thm_ddmb}
  \text{ }{\\}
  \textnormal{(a)} The involved Lindeberg condition~\eqref{eq:conv_wbs_6} is implied by \eqref{eq:conv_wbs_2} 
  combined with
  $$ \max_{1 \leq i \leq 2n} \E[ D^4_{2n;i} |\mac{A}_n] \in \mac{O}_p(n^{\varepsilon}) 
  \qquad \text{for some } \varepsilon \in (0,1),$$ 
  instead of Condition~\eqref{eq:conv_wbs_4}
  since the multipliers $D_{2n;i}$ then induce a conditional Lyapunov central limit theorem.\\
  \textnormal{(b)} In Theorem~\ref{thm:ddmb_cifs} it is important that the DDMB weights are not influenced by the data in the limit. For example, the asymptotic variances 
$\sigma_{i;n}^2$ should be $1$ regardless of the actual data. In this way DDMB weights and (non-identically distributed) wild bootstrap weights
    are seen to be equivalent asymptotically.
\end{rem}

The conditions of Theorem~\ref{thm:ddmb_cifs} are satisfied for the following resampling schemes.
\begin{exs}\label{ex: proc}
 \textnormal{(a)} The wild bootstrap as in~\cite{beyersmann13} with i.i.d. multipliers $D_{2n;i}= G_i$ having mean zero, 
 variance $1$ and finite fourth moment falls under our approach.\\
 \textnormal{(b)} 
  As special cases of \textnormal{(a)} we obtain the resampling technique of~\cite{lin97} with i.i.d. standard normal weights $G_i$ as well as the Poisson-wild bootstrap with data-independent weights 
$D_{2n;i}\stackrel{i.i.d.}{\sim} Poi(1) - 1$.\\
 \textnormal{(c)} Moreover, even a wild bootstrap with non-identically distributed random variates $G_i$, all having mean zero, variance $1$ and finite fourth moment, is covered. \\
\textnormal{(d)} Another example is the \emph{weird bootstrap} of~\cite[Section~IV.1.4]{abgk93}. 
    For simplicity, we abbreviate $(M_i)_i := (M_{j;i})_{i,j}$ and $(N_i)_i := (N_{j;i})_{i,j}$.
    Applying the procedure from above,
    we replace the individual- and transition-specific martingales $\d M_i$ with $B_i \d N_i$.
    Here the random variable $B_i$ is given by
    \begin{align}
    \label{eq:weird_weight}
      B_i = \B \Big(Y(\tilde T_i), \frac{1}{Y(\tilde T_i)} \Big) - 1
    \end{align}
    with $\tilde T_i$ as above 
    and all binomially-$\B (Y(\tilde T_i), 1/Y(\tilde T_i))$ distributed random variables
    are assumed to be independent given the data.
    Note that the subtraction of $1$ in~\eqref{eq:weird_weight} corresponds to a centering at $\sum_{i=1}^{2n} Z_{2n;i}$;
    in~\cite{abgk93} this is done by subtracting the Nelson-Aalen estimator.
    The centering by 1 can also be deemed as $\E[B_i | \mac{A}_n ] = 0$;
    note here that $Y(\tilde T_i) > 0$ for all $i$.
    Further, the variances are given by $\var(B_i | \mac{A}_n ) = 1 - Y(\tilde T_i)^{-1} \oPo 1$, cf. Condition~\eqref{eq:cond_beyersmann}. 
    This again shows the close connection between weird and wild bootstrap (with Poisson weights).
    However, 
    these binomial objects are in general unconditionally dependent of the data
    since the above parameters depend on $Y$ and $\tilde T_i$.\\
\textnormal{(e)} Moreover, other data-dependent multipliers that put different weights on observations depending on their time under study are conceivable. 
  A special example is given at the end of the next section. 
\end{exs}
\section{Deduced Inference Procedures}
\label{sec:tests}
In this section we exemplify some inferential applications of the developed methodology.
Throughout, let $I = [t_1,t_2] \subseteq [0,\tau)$ again be any compact interval.

\subsection{Simultaneous Confidence Bands, One-Sample Tests and Confidence Intervals}

Following~\cite{lin97} and \cite{beyersmann13} {\it time-simultaneous confidence bands} can be constructed by the functional delta method 
as follows:
\begin{enumerate}
 \item We consider the transformed Aalen-Johansen estimator $\gamma_n(t) = \sqrt{n} g(t) \{ \phi(\widehat F_1(t)) - \phi(F_1(t)) \}$
 \item with transformation $\phi$ (such as $\phi_1(t) = \log ( - \log(1-t))$),
 \item weight function $g$ (such as $g_1(t) = \log(1-\widehat F_1(t))/\widehat \sigma(t)$ or $g_2(t) = \log(1-\widehat F_1(t))/ (1+ \widehat \sigma^2(t))$),
 \item variance estimator $\widehat \sigma^2(t) = n \widehat \var (\widehat F_1(t)) / (1-\widehat F_1(t))^2$,
 \item and its corresponding resampling version $\widehat \gamma_n(t) = g(t) \phi'(\widehat F_1(t)) \widehat W_n^D$.
\end{enumerate}
The variance $n \widehat \var (\widehat F_1(t))$ in the DDMB resampling version $\widehat \gamma_n$ 
is similar to the wild bootstrap variance estimator of~\cite{dobler14a}, where we now use the same DDMB weights as in $\widehat W_n^D$. 
Again following \cite{lin97} and \cite{beyersmann13} we call the bands resulting from $g_1$ and $g_2$ equal precision and Hall-Wellner bands, respectively.
Simulating the 95\% quantile $q_{.95}$ of $\sup_{t \in I} | \widehat \gamma_n(t) |$
(thereby keeping the data fixed) and using the transformation $\phi_1$,
approximate 95\% confidence bands for $(F_1(t))_{t \in I}$ are obtained as
\begin{align}
\label{eq:cb}
  1 - (1 - \widehat F_1(t))^{\exp(\pm q_{.95} / (\sqrt n g(t)) )}, \quad t \in I.
\end{align}
Equivalent {\it Kolmogorov-Smirnov-type tests} $\varphi$ for the null hypothesis $H_= : \{ F_1 = F \text{ on } I \}$ for a prespecified function $F$ 
are given as $\varphi = 0$ if and only if $F$ is completely contained in the above confidence band.

{\color{black}
Finally, {\it pointwise confidence intervals} for the binomial probability $F_1(s) = P(X_s = 1)$ for each $s \in I$ are immediately obtained by letting
$t_1 = t_2 = s$ so that $I = \{s\}$.}

\subsection{Two-Sample Resampling Tests for Equal CIFs}

%
%
Another topic of interest is the comparison of two CIFs for the same risk but from independent sample groups with sample sizes $n_1$ and $n_2$, respectively.
For this reason we introduce all quantities of the previous sections sample-specifically
and denote them with a superscript $^{(k)}$, $k=1,2$. 
For example, $F_1^{(2)}$ is the second group's CIF for the first risk,
$\tau^{(1)}$ is the terminal point for observations in the first group
and $D_{2n;i}^{(k)}$ is the DDMB weight for $Z_{2n;i}^{(k)}$, where $n = n_1 + n_2$.
Further, we define $\tau = \tau^{(1)} \wedge \tau^{(2)}$ and $\mac{A}_n = \sigma(\mac{A}_{n_1}^{(1)}, \mac{A}_{n_2}^{(2)})$. 
We would now like to construct non-parametric resampling tests for the hypotheses
\begin{align*}
  H_= : \{F_1^{(1)} = F_1^{(2)} \text{ on } [t_1,t_2] \} 
    \text{ versus } K_{\neq} : \{F_1^{(1)} \neq F_1^{(2)} \text{ on a subset } A \subseteq [t_1,t_2] \text{ such that } \lambda\!\!\lambda(A) > 0 \},
\end{align*}
where $\lambda\!\!\lambda$ denotes Lebesgue measure.
To this end we first introduce the two-sample version of~\eqref{eq:w} as a scaled difference of Aalen-Johansen estimators, namely
\begin{align*}
 W_{n_1,n_2} = \sqrt{\frac{n_1 n_2}{n}} ( \widehat F_1^{(1)} - \widehat F_1^{(2)} ) 
  = \sqrt{\frac{n_2}{n}} W_{n_1}^{(1)} - \sqrt{\frac{n_1}{n}} W_{n_2}^{(2)} + \sqrt{\frac{n_1 n_2}{n}} ( F_1^{(1)} - F_1^{(2)} ).
\end{align*}
Based on a similar martingale representation as in Equation~\eqref{eq:martingale rep},
we arrive at a DDMB version of $W_{n_1,n_2}$,
\begin{align}
\label{eq:w_d_hut}
 \widehat W^D_{n_1,n_2} = \sqrt{\frac{n_1 n_2}{n}} \Big( \sum_{i=1}^{2n_1} D_{2n;i}^{(1)} Z_{2n;i}^{(1)} + \sum_{i=1}^{2n_2} D_{2n;i}^{(2)} Z_{2n;i}^{(2)} \Big),
\end{align}
see \eqref{eq:wildbstat b} for the corresponding one-sample case. 
This gives us a generalization of the two-sample wild bootstrap statistic of~\cite{dobler14b}
where such resampling tests based on i.i.d. multipliers are compared to computationally less expensive approximate tests.

Following the lines of~\cite{dobler14b}, we now construct several resampling tests for $H_=$ versus $K_{\neq}$.
This is accomplished by plugging the statistic $W_{n_1,n_2}$ and its resampled version $\widehat W^D_{n_1,n_2}$ 
into a continuous functional $\psi: \D[0,\tau] \rightarrow \R$
such that $\psi(W_{n_1,n_2})$ tends to infinity in probability if the alternative hypothesis $K_{\neq}$ is true. 
In this subsection the asymptotic statements are referred to as $n \rightarrow \infty$ and $n_1/n \rightarrow \kappa\in(0,1)$.  
Since $W_{n_1,n_2}$ and $\widehat W^D_{n_1,n_2}$ possess the same Gaussian limit distribution,
the resulting test depending on $\psi(W_{n_1,n_2})$ (as test statistic) 
and $\psi(\widehat W^D_{n_1,n_2})$ (yielding a data-dependent critical value) is of asymptotic level $\alpha$.
Furthermore, the test is consistent, that is, it rejects the alternative hypothesis $K_{\neq}$ with probability tending to 1 as $n \rightarrow \infty$.
Thus, the following two theorems follow immediately from the weak convergence results 
of the preceding theorem for $W_{n_1,n_2}$ and $\widehat W^D_{n_1,n_2}$ 
and from applications of the continuous mapping theorem.
\begin{thm}[A Kolmogorov-Smirnov-type test]
 \label{thm:resampling_wild_cif_equality_2}
 Choose a triangular array of DDMB weights $D_{2n;i}^{(k)}, i=1,\dots,2 n_k, \\ k=1,2,$ 
 satisfying~\eqref{eq:conv_wbs_1} -- \eqref{eq:conv_wbs_6}
 and let $w: [t_1,t_2] \rightarrow (0,\infty)$ be a bounded weight function.
 A consistent, asymptotic level $\alpha$ resampling test for $H_=$ vs. $K_{\neq}$ is given by
  \begin{align*}
    \varphi^{KS} = \left\{
    \begin{array}{rlc}
      1 && > c^{KS} \\
	&	\sup_{u \in [t_1,t_2]} w(u) | W_{n_1,n_2}(u) |	&  \\
      0	&& \leq c^{KS}
    \end{array}
    \right.
  \end{align*}
  where $c^{KS}( \cdot )$ is the $(1-\alpha)$-quantile of the conditional distribution 
  \begin{align*}
  \mac{L} \Big( \sup_{u \in [t_1,t_2]} w(u) | \widehat W_{n_1,n_2}^D (u) | \; \Big| \; \mac{A}_n \Big).
  \end{align*}
\end{thm}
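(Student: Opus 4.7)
The plan is to reduce the theorem to a standard bootstrap–consistency argument by combining the two-sample extension of the weak convergence in \eqref{eq:weakconv} with the conditional weak convergence supplied by Theorem~\ref{thm:ddmb_cifs}, and then to apply the continuous mapping theorem to the functional $\psi: f \mapsto \sup_{u \in [t_1,t_2]} w(u)|f(u)|$, which is continuous on $\D([t_1,t_2])$ since $w$ is bounded.

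First I would establish unconditional weak convergence of $W_{n_1,n_2}$ on $\D([t_1,t_2])$ under $H_=$. Using the decomposition
\begin{equation*}
 W_{n_1,n_2} = \sqrt{n_2/n}\, W_{n_1}^{(1)} - \sqrt{n_1/n}\, W_{n_2}^{(2)},
\end{equation*}
the independence of the two groups, the assumption $n_1/n \to \kappa \in (0,1)$, and the one-sample result~\eqref{eq:weakconv} applied to each $W_{n_k}^{(k)}$ yield $W_{n_1,n_2} \oDo U^\ast := \sqrt{1-\kappa}\, U^{(1)} - \sqrt{\kappa}\, U^{(2)}$, where $U^{(1)},U^{(2)}$ are independent zero-mean Gaussian processes with covariance functions of type~\eqref{eq:zeta}. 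Next, Theorem~\ref{thm:ddmb_cifs} applied separately to each group, together with the conditional independence of $(D^{(1)}_{2n;i})_i$ and $(D^{(2)}_{2n;i})_i$ given $\mac{A}_n$, shows that $\widehat W_{n_1,n_2}^D$ defined in~\eqref{eq:w_d_hut} converges in distribution on $\D([t_1,t_2])$ to the same process $U^\ast$, conditionally in probability given $\mac{A}_n$.

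The continuous mapping theorem then gives, on the one hand, $\psi(W_{n_1,n_2}) \oDo \psi(U^\ast)$, and, on the other hand, $\psi(\widehat W_{n_1,n_2}^D) \oDo \psi(U^\ast)$ conditionally on $\mac{A}_n$ in probability. Since $\psi(U^\ast) = \sup_{u \in [t_1,t_2]} w(u)|U^\ast(u)|$ is the supremum of a non-degenerate centred Gaussian process (under~\eqref{eq:cond_beyersmann} the limiting covariance is strictly positive on $[t_1,t_2]$), its distribution function is continuous by Tsirelson's theorem. Consequently the conditional $(1-\alpha)$-quantile $c^{KS}$ of $\psi(\widehat W^D_{n_1,n_2})$ converges in probability to the deterministic $(1-\alpha)$-quantile $c^\ast$ of $\psi(U^\ast)$, by a standard Polya–type lemma as e.g.\ Lemma~23.3 of van der Vaart. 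Combining this with $\psi(W_{n_1,n_2}) \oDo \psi(U^\ast)$ and Slutsky's theorem proves that $P_{H_=}(\varphi^{KS} = 1) \to \alpha$, i.e.\ asymptotic level $\alpha$.

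Finally I would treat consistency under $K_{\neq}$. By the uniform consistency of the Aalen–Johansen estimators on $[t_1,t_2]$ we have $\widehat F_1^{(k)} \to F_1^{(k)}$ uniformly in probability, while $\sqrt{n_1 n_2/n} \to \infty$. Hence, on the set $A$ of positive Lebesgue measure where $F_1^{(1)} \neq F_1^{(2)}$ and $w > 0$, the test statistic satisfies $\psi(W_{n_1,n_2}) \oPo \infty$. At the same time the bootstrap process $\widehat W^D_{n_1,n_2}$ depends on the data only through the consistent plug-in estimators that enter $Z^{(k)}_{2n;i}$, so its conditional law remains tight in probability, and hence $c^{KS}$ stays bounded in probability. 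Therefore $P_{K_{\neq}}(\varphi^{KS} = 1) \to 1$. The main subtlety I expect is the rigorous transfer from conditional weak convergence in probability to convergence in probability of the conditional quantile $c^{KS}$; this is the step where one has to invoke continuity of the limit distribution and the Polya-type argument mentioned above, rather than just plain weak convergence on the Skorohod space.
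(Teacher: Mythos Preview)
Your proposal is correct and follows essentially the same approach as the paper, which dispatches both this theorem and the Cram\'er--von Mises companion in a single sentence preceding the statements (``follow immediately from the weak convergence results of the preceding theorem for $W_{n_1,n_2}$ and $\widehat W^D_{n_1,n_2}$ and from applications of the continuous mapping theorem''). Your write-up is a careful elaboration of exactly this route, supplying the details (independence of groups, Tsirelson for continuity of the limit law, boundedness of $c^{KS}$ under the alternative) that the paper leaves implicit.
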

\begin{thm}[A Cram\'{e}r-von Mises-type test]
 \label{thm:resampling_wild_cif_equality_1}
 Choose a triangular array of DDMB weights $D_{2n;i}^{(k)}, i=1,\dots,2 n_k, \\ k=1,2,$ 
 satisfying~\eqref{eq:conv_wbs_1} -- \eqref{eq:conv_wbs_6}
 and let $w: [t_1,t_2] \rightarrow (0,\infty)$ be an integrable weight function.
 A consistent, asymptotic level $\alpha$ resampling test for $H_=$ vs. $K_{\neq}$ is given by
  \begin{align*}
    \varphi^{CvM} = \left\{
    \begin{array}{rlc}
      1 && > c^{CvM} \\
	&	\int_{t_1}^{t_2} w(u) W^2_{n_1,n_2}(u) \d u	&  \\
      0	&& \leq c^{CvM}
    \end{array}
    \right.
  \end{align*}
  where $c^{CvM}( \cdot )$ is the $(1-\alpha)$-quantile of the conditional distribution 
  \begin{align*}
   \mac{L}\Big(  \int_{t_1}^{t_2} w(u) ( \widehat W_{n_1,n_2}^D )^2 (u) \d u 
    \; \Big| \; \mac{A}_n \Big).
  \end{align*}
\end{thm}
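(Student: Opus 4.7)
The plan is to apply the continuous mapping theorem to the functional $\psi:\D[t_1,t_2] \to [0,\infty)$ defined by $\psi(x) = \int_{t_1}^{t_2} w(u) x^2(u) \d u$, first to $W_{n_1,n_2}$ and then, conditionally given $\mac{A}_n$, to $\widehat W_{n_1,n_2}^D$. A preliminary step is to verify that $\psi$ is continuous at every $x \in C[t_1,t_2]$ in the Skorohod topology: if $x_n \to x$ in the Skorohod sense and $x$ is continuous, then $x_n \to x$ uniformly on $[t_1,t_2]$, hence $x_n^2 \to x^2$ uniformly, and boundedness of $x$ combined with integrability of $w$ gives $\psi(x_n) \to \psi(x)$ by dominated convergence. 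Since both limit processes considered below have continuous paths almost surely, this is the only continuity point required.

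Under $H_=$ I would first establish $W_{n_1,n_2} \oDo U^*$ on $\D[t_1,t_2]$ for a path-continuous, zero-mean Gaussian process $U^*$, by writing $W_{n_1,n_2} = \sqrt{n_2/n}\, W_{n_1}^{(1)} - \sqrt{n_1/n}\, W_{n_2}^{(2)}$, invoking~\eqref{eq:weakconv} within each independent group together with $n_k/n \to \kappa_k \in (0,1)$. Simultaneously, Theorem~\ref{thm:ddmb_cifs} applied within each group and recombined via~\eqref{eq:w_d_hut} gives that, conditionally on $\mac{A}_n$, $\widehat W_{n_1,n_2}^D \oDo U^*$ in probability. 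The continuous mapping theorem then yields
\begin{align*}
\psi(W_{n_1,n_2}) \oDo \psi(U^*) \qquad \text{and} \qquad \psi(\widehat W_{n_1,n_2}^D) \oDo \psi(U^*) \text{ in probability, given } \mac{A}_n.
\end{align*}
As $U^*$ is a non-degenerate centered Gaussian on $C[t_1,t_2]$ and $w>0$, the law of $\psi(U^*)$ has a continuous distribution function, so a Polya-type argument transfers the conditional convergence into convergence in probability of the data-dependent quantile $c^{CvM}$ to the deterministic $(1-\alpha)$-quantile of $\psi(U^*)$; together with the unconditional convergence of the test statistic, Slutsky's lemma delivers the asymptotic level $\alpha$.

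For consistency under $K_{\neq}$ I would decompose $W_{n_1,n_2}(u) = V_n(u) + \sqrt{n_1 n_2/n}\,\Delta(u)$ with $\Delta = F_1^{(1)} - F_1^{(2)}$ and $V_n := W_{n_1,n_2} - \sqrt{n_1 n_2/n}\,\Delta$ tight on $\D[t_1,t_2]$. Using $(a+b)^2 \geq \tfrac12 b^2 - a^2$,
\begin{align*}
\int_{t_1}^{t_2} w(u) W_{n_1,n_2}^2(u) \d u \;\geq\; \tfrac12\, \frac{n_1 n_2}{n} \int_{t_1}^{t_2} w(u) \Delta^2(u) \d u \;-\; \int_{t_1}^{t_2} w(u) V_n^2(u) \d u,
\end{align*}
where the first term diverges to $+\infty$ in probability since $\lambda\!\!\lambda(A)>0$ and $w>0$ make $\int w\Delta^2$ strictly positive, while the second is $\mac{O}_p(1)$ by the continuous mapping theorem applied to the tight $V_n$. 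On the other hand, $\widehat W_{n_1,n_2}^D$ depends only on the centered DDMB weights and the estimators $Z_{2n;i}^{(k)}$ and carries no contribution from $\Delta$; hence the same argument as under the null shows that $c^{CvM}$ remains bounded in probability, so the rejection probability tends to one.

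The main obstacle is the conditional continuous-mapping step: promoting the in-probability conditional convergence $\psi(\widehat W_{n_1,n_2}^D) \oDo \psi(U^*)$ to convergence in probability of the data-dependent quantile $c^{CvM}(\mac{A}_n)$ to the deterministic $(1-\alpha)$-quantile of $\psi(U^*)$. This reduces to absolute continuity of the law of $\psi(U^*)$, which in turn requires non-degeneracy of $U^*$ through the covariance kernel~\eqref{eq:zeta} in at least one group, together with strict positivity of $w$ on $[t_1,t_2]$.
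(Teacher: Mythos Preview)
Your proposal is correct and follows exactly the route the paper indicates: weak convergence of $W_{n_1,n_2}$ and of $\widehat W_{n_1,n_2}^D$ (conditionally) to the same Gaussian limit, combined with the continuous mapping theorem applied to the Cram\'er--von~Mises functional. You have in fact supplied considerably more detail than the paper, which merely states that the theorem ``follow[s] immediately from the weak convergence results of the preceding theorem \dots\ and from applications of the continuous mapping theorem.''
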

\begin{rem}
 For given $(D_{2n;i}^{(k)})_{i,k}$ we could also choose the DDMB weights as the slightly modified variables 
 $\tilde D_{2n;i}^{(k)} = ( 1 + o_p(1)) D_{2n;i}^{(k)}$ for asymptotically negligible terms $o_p(1)$ 
 which are supposed to be measurable w.r.t. $\mac{A}_n$.
 In the article of~\cite{dobler14a} it is seen that wild bootstrap tests may tend to be slightly too liberal
 for strongly unequal sample sizes or when censoring is present.
 Therefore, the choice of, for instance, $o_p(1) = o(1) = \frac{| n_1 - n_2 |}{n_1 n_2}$ or its square root 
 leads to slightly more conservative versions of the above tests in case of unequal sample sizes.
 {\color{black}
 In order to additionally account for censoring, we could even choose the rather bigger $o_p(1) = \frac{| Y^{(1)} - Y^{(2)} |}{Y^{(1)} Y^{(2)}}(t_2)$ 
 (assuming approximately equal censoring rates in both groups)
 since the denominator tends to be smaller the more individuals are censored.}
\end{rem}

%
%
%
%
%
%
%
%
%
%
%
%
%
%
%
%
%
%
%
\section{Simulations}
\label{sec: simus} {\color{black}The aim of the present simulation study is to
  assess the coverage probabilities of confidence bands for the first CIF in a situation similar
  to the real data example which is introduced and analyzed in
  Section~\ref{sec: example}. To this end, ties in the original data set have
  been broken. Data have been simulated from smoothed versions of the
  non-parametric estimators; see \cite{bmc2011} for a similar
  approach. Table~\ref{tab:event_perc} reports comparable percentages of type
  1 and type 2 events as well as of censorings for both the original data set
  and 50,000 simulated individuals.}



\begin{table}[ht]
\centering
\begin{tabular}{c|cc}
  \hline
  \hline
& data-set & simulations \\ \hline
  type 1 events & 38.21 & 38.68 \\ 
  type 2 events & 20.28 & 20.06 \\ 
  censorings & 41.51 & 41.26 \\ 
   \hline
\end{tabular}
\caption{Percentages of types of observations.} 
\label{tab:event_perc}
\end{table}

The simulations were conducted using the R-computing environment, version
3.1.3 (R Development Core Team, 2015), each with $N_{sim} = 10,000$ simulation
runs for simulations with up to 100 individuals under study.  For larger
groups of individuals, we have chosen $\tilde N_{sim} = 1000$ simulation runs
due to the enormously increasing computational efforts.  For determination of
the random quantile $q_{.95}$ we have run $B= 999$ bootstrap runs in each
simulation step.  We {\color{black}constructed} both Hall-Wellner and equal
precision bands {\color{black}on the time interval $[.5, 5]$}, each based on
either standard normal, centered $Poi(1)$ or weird bootstrap weights within the
DDMB approach.  Table~\ref{tab:cps} gives the resulting coverage probability
estimates for $n \in \{ 50, 60, \dots, 100, 200, 300, 636 \}$ simulated
individuals under study in each simulation run, {\color{black}where $n=636$ is
  the sample size of the data example studied in Section~\ref{sec: example}.}

{\color{black}All coverage probabilities in Table~\ref{tab:cps} are too small
  for sample sizes~$n\le 200$, but with a tendency of better coverage
  probabilities for Poisson multipliers and the
  weird bootstrap. For these two resampling procedures, there is also a
  preference for equal precision bands. This is also the scenario which
  draws near to the nominal level for~$n=300$, while standard normal multipliers
  lead to a coverage probability less than~$91\%$ for both types of bands. All
  bootstrap variants approach the nominal level for~$n=636$. 
  Finally, Figure~\ref{fig:cbs_4D} in the subsequent section shows an
  empirical probability of~$51/636\approx 8.0\%$ for being at risk
  at~$t=5-$ which reinforces the impression that the construction of bands was
  an ambitious aim for sample sizes of~$n\le 100$. }



\begin{table}[ht]
        \centering
        \begin{subtable}{0.45\textwidth}
	  \begin{tabular}{c|ccc}
	    \hline
	    \hline
	    n & normal & Poisson & weird \\ \hline
	    50 & 79.4 & 80.77 & 79.84 \\ 
	    60 & 82.45 & 82.68 & 82.67 \\ 
	    70 & 84.86 & 85.59 & 85.44 \\ 
	    80 & 86.2 & 86.74 & 86.86 \\ 
	    90 & 87.9 & 88.21 & 88.49 \\ 
	    100 & 88.22 & 89.07 & 89.50 \\ 
	    200 & 89.9 & 91.5 & 92.1 \\
	    300 & 90.9 & 93.6 & 93.1 \\
	    636 & 94.8 & 94.1 & 94.9 \\\hline
	  \end{tabular}
	  \caption{Hall-Wellner bands} 
	\end{subtable}
	\begin{subtable}{0.45\textwidth}
	  \begin{tabular}{c|ccc}
	    \hline
	    \hline
	    n & normal & Poisson & weird \\ \hline
	    50 & 76.49 & 80.11 & 79.72 \\ 
	    60 & 80.44 & 84.22 & 83.43  \\ 
	    70 & 82.89 & 86.34 & 86.49 \\ 
	    80 & 85.36 & 87.93 & 88.22 \\ 
	    90 & 86.05 & 89.67 & 89.38 \\ 
	    100 & 87.68 & 90.55 & 91.06 \\ 
	    200 & 91.1 & 93.3 & 93.9 \\
	    300 & 90.6 & 95.2 & 95.1 \\
	    636 & 94.1 & 95.7 & 94.4 \\\hline
	  \end{tabular}
	  \caption{Equal precision bands} 
	\end{subtable}
        \caption{Per cent coverage probabilities of confidence bands.}
	\label{tab:cps}
\end{table}

\newpage

\section{A Real Data Example}
\label{sec: example}

{\color{black} We consider data from the 4D study \citep{wanner05}, which was a
  prospective randomized controlled trial evaluating the effect of lipid
  lowering with atorvastatin in diabetic patients receiving hemodialysis. The
  primary outcome was a composite of death from cardiac causes, stroke, and
  non-fatal myocardial infarction, subject to the competing risk of death from
  other causes. The motivation of the trial was that statins are protective
  with respect to cardiovascular events for persons with type 2 diabetes
  mellitus without kidney disease, but a possible benefit in patients
  receiving hemodialysis had until then not been assessed. \cite{g05:_sampl}
  have discussed sample size planning with competing risks outcomes for the 4D
  study, and \cite{bmc2011} have used the 4D study to advocate a simulation
  point of view for the interpretation of competing risks.

  \cite{wanner05} found a non-significant protective effect of atorvastatin on
  the cause-specific hazard of the primary outcome (hazard ratio $0.92$ with
  95\%-confidence interval [0.77, 1.10]). There was essentially no difference
  between groups for the competing cause-specific hazard, implying similar
  CIFs in the groups; see \cite{bmc2011} for an in-depth discussion. Hence, we
  restrict ourselves in this section to a one sample scenario and re-analyze
  the control group data ($636$~patients). The data have been made available
  in the \verb=R=-package \emph{etm} \citep{beyersmann12}, and our results may
  therefore be checked for reproducibility. Ties have been broken as in
  Section~\ref{sec: simus}.} 


{\color{black}Figure~\ref{fig:cbs_4D} shows Hall-Wellner (left panel) and equal
  precision (right panel) bands for the CIF of the primary outcome, using the
  weird bootstrap and the wild bootstrap with both, standard normal and centered $Poi(1)$ weights.
  Within each panel, differences between the bands are
  invisible to the naked eye. Table~\ref{tab:areas} additionally shows the
  areas between upper and lower boundary of the confidence bands; differences
  are again negligible.}


The only notable difference is the form of both types of bands: While the
Hall-Wellner bands' boundaries seem to have almost the same distances for all
points of time, the equal precision bands start with a narrower band at $t =
.5$ which clearly becomes wider as time progresses.  But eventually, the areas
of both types of bands are again comparable.

{\color{black}Figure~\ref{fig:cbs_4D} additionally shows pointwise,
  log-log-transformed confidence intervals. As expected, the pointwise
  intervals are narrower than the simultaneous bands, but the bands do perform
  competitively.}
  
\begin{figure}[ht]
        \centering
        \begin{subfigure}{0.5\textwidth}
                \includegraphics[width=\textwidth, height=0.8\textwidth]{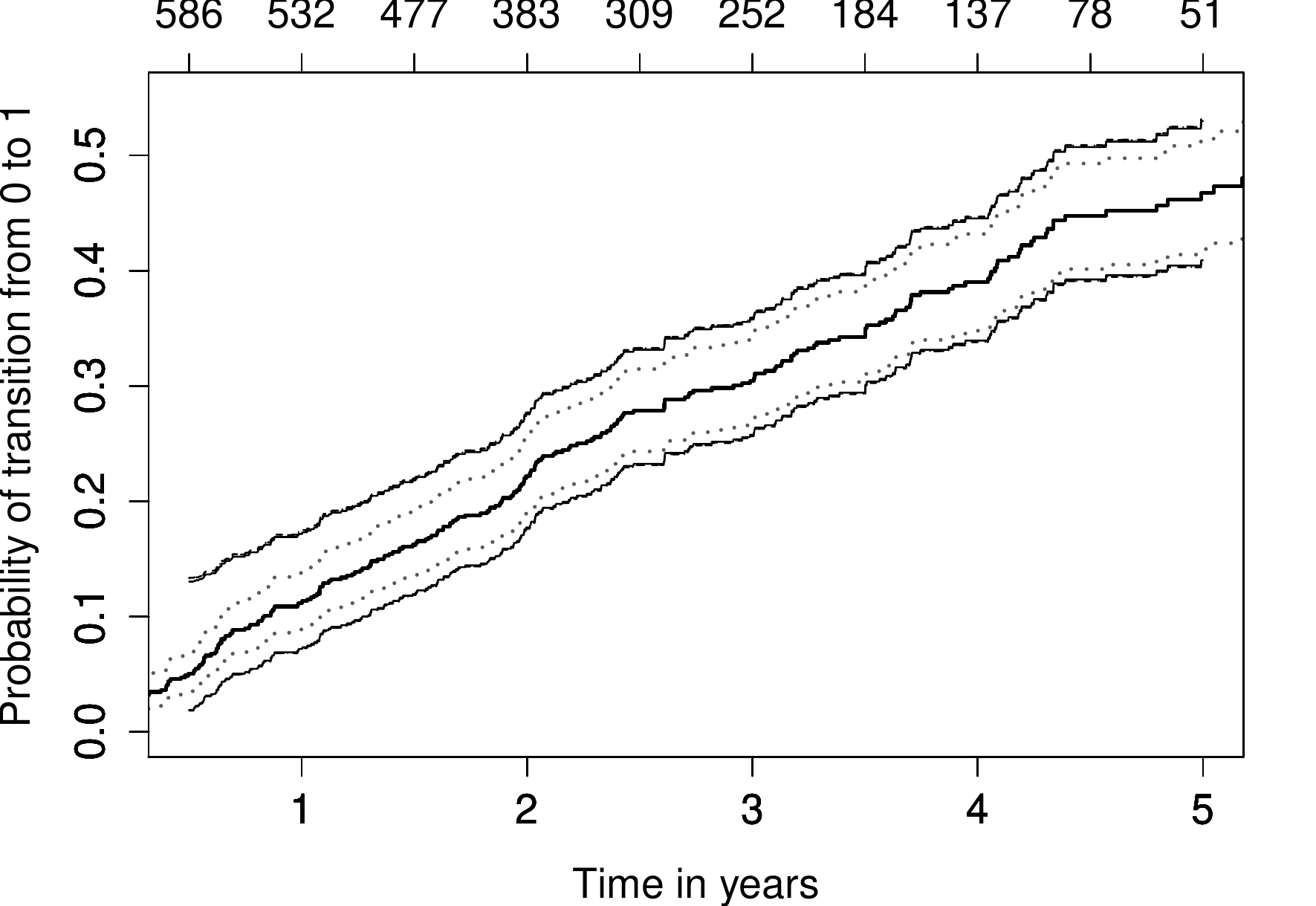}
                \caption{Hall-Wellner bands for $n=636$}
        \end{subfigure}%
        \begin{subfigure}{0.5\textwidth}
                \includegraphics[width=\textwidth, height=0.8\textwidth]{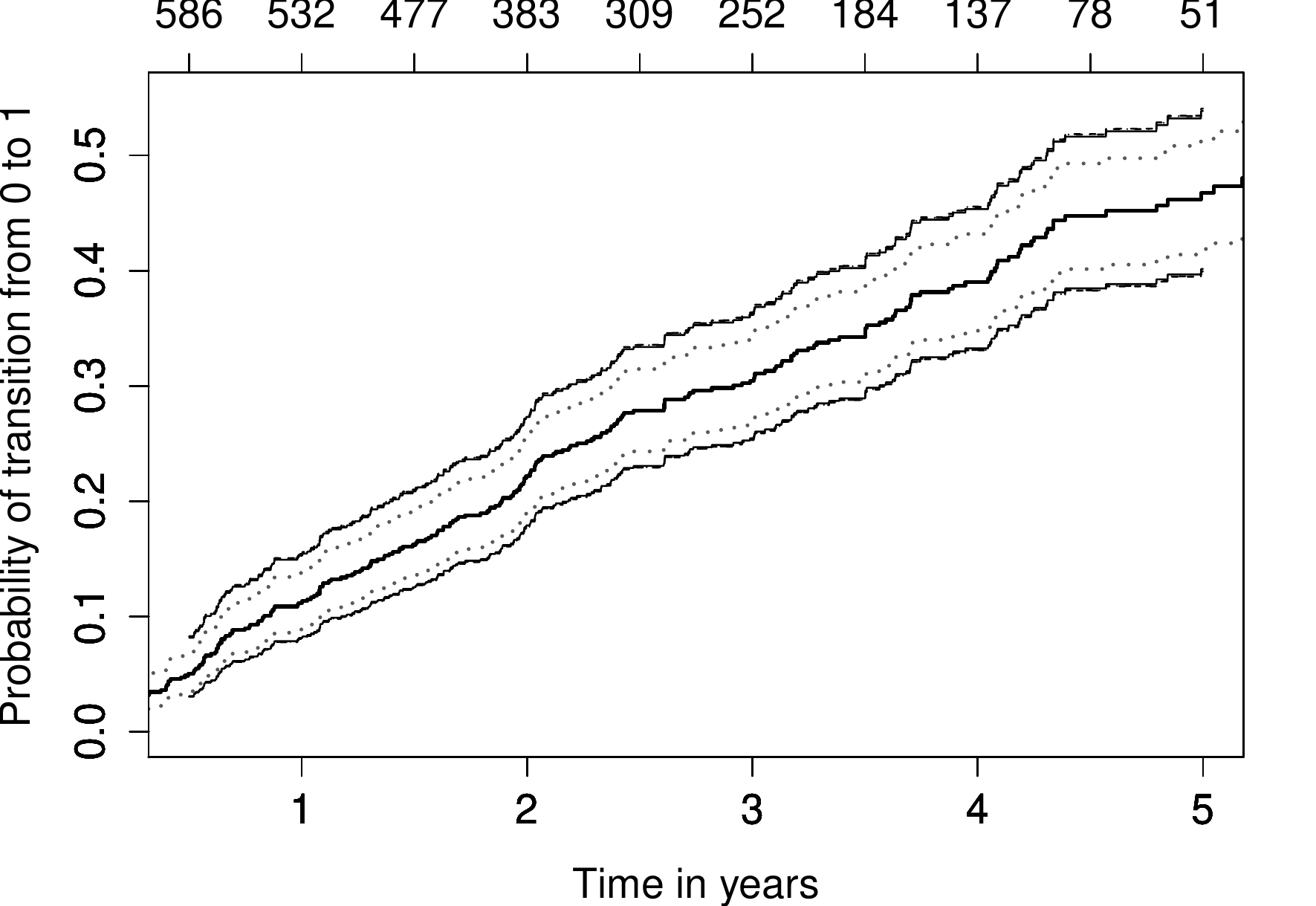}
                \caption{Equal precision bands for $n=636$}
        \end{subfigure}
        \caption{Approximate 95\% confidence bands for $F_1$ using different DDMB weights:
	  standard normal (----), \\ centered Poi(1) (- - -), weird bootstrap ($\cdot - \cdot$) multipliers.
	  The solid line in the middle is the corresponding Aalen-Johansen estimator.
	  Pointwise 95\% confidence intervals ($\cdots\cdot$) for $F_1$ also based on a $\log-\log$ transformation, plotted in dark grey,
	  have been calculated using the \texttt{R}-package \emph{etm}.
	  Above of the plots the number of individuals under risk shortly before each half-year is indicated.}
      \label{fig:cbs_4D}
\end{figure}

\begin{table}[ht]
\centering
	\begin{tabular}{c|cc}
	  \hline
	  \hline
	   & Hall-Wellner & Equal precision \\ \hline
	  normal & .4655 & .4621 \\ 
	  Poisson & .4783 & .4770 \\ 
	  weird & .4764 & .4746 \\ 
	  \hline
	\end{tabular}
	\caption{Areas covered by the confidence bands in Figure~\ref{fig:cbs_4D} for different resampling schemes.} 
	\label{tab:areas}
\end{table}

We also performed analogous analyses in a data subsample with 200 and 300 individuals. 
In line with our simulation results, 
the wild bootstrap with standard normal multipliers produced narrower bands, 
but - similar to the complete cohort - the differences between the different bands were of little practical importance in this example. 
In the analyses of the subsample, the bands again performed competetively when compared to pointwise confidence intervals. 
(Results not shown.)


\section{Discussion and Outlook}
\label{sec: dis} {\color{black}We have introduced and rigorously justified the
  new data-dependent multiplier bootstrap for non-parametric analysis of
  survival data. Observation may be restricted by independent right-censoring
  and left-truncation, but a strict i.i.d.\ setup is not required. Our
  developments have included the case where failure may be due to several
  competing risks, where resampling is particularly attractive due to lack of
  asymptotic pivotal approximations. Our general framework includes both the wild
  bootstrap and the weird bootstrap as special cases. The wild bootstrap with
  standard normal multipliers is a popular and computationally convenient
  technique \citep[e.g.,][]{Mart:Sche:dyna:2006}. The weird bootstrap,
  introduced by \cite{abgk93} in their essential book on \emph{Statistical
    Models Based on Counting Processes}, appears to be rarely used, if at all,
  although it has been implemented in software. To the best of our knowledge,
  our paper is the first to rigorously show asymptotic correctness of the
  weird bootstrap in the present context.  The variety of available resampling
  techniques raises the question of which bootstrap to use. Efron's original
  proposal of repeatedly taking random samples with replacement from the
  randomly censored observations \citep{efron81} is arguably closest to his
  original approach \citep{efron79}, but does rely on a strict i.i.d.\ setup;
  see also the discussion in \citet[Section~IV.1.4]{abgk93}.  The wild
  bootstrap with standard normal multipliers is motivated by the martingale
  representations used in the proofs of weak convergence of the original
  estimators. In a nutshell, the idea is to replace asymptotic normality by
  finite sample normality (because of normal multipliers, keeping the data
  fixed) with approximately the right covariance. The general wild bootstrap
  allows for non-normal multipliers, replacing finite sample normality by
  approximate normality.  But the weird bootstrap is perhaps the most natural
  resampling scheme for survival data. To see this, recall that one major
  reason for basing survival analysis on hazards is censoring. In our setting,
  and assuming for the time being independent random censorship by $C$, we have
  that \begin{equation*} \alpha_{j}(t) {\rm d}t = P(T\in [t, t + {\rm d}t),
    X_T=j\,|\, T\ge t)=P(T\in [t, t + {\rm d}t), X_T=j, T\le C\,|\, T\ge t,
    C\ge t), \end{equation*} where the first equality is the definition 
    from Equation \eqref{eq:alpha} and the second equality follows because of random
  censoring. Independent censoring now essentially requires the last equation
  (reformulated using counting processes and at-risk processes) to hold rather
  than the existence of a latent censoring time, which is assumed to be
  stochastically independent of~$(T,X_T)$.  It is the second equality that,
  first of all, motivates the increments of the cause-specific NAE, say ${\rm
    d}\widehat{A}_j(t)={\rm d}N_j(t)/Y(t)$. The weird bootstrap continues from
  this point by sampling $B(Y(t),{\rm d}\widehat{A}_j(t))$-distributed
  increments at the jump times of
  $N$.  The fact that sampling is performed independently at the jump times
   is justified by the asymptotic distribution of
  $\sqrt{n}(\widehat{A}_j-A_j)$
   having independent increments.

  Our simulation results have shown that one should keep alternatives to the
  wild bootstrap with the almost exclusively used standard normal multipliers
  in mind. In the scenarios that we have considered, we found a preference for
  Poisson multipliers and for the weird  bootstrap. \cite{beyersmann13} who only considered the wild bootstrap also
  found a preference for Poisson multipliers, but the differences in the
  present paper were more pronounced. We did not find noticeable differences
  between the approaches in the real data example, but our analysis
  illustrated that simultaneous confidence bands may perform competitively
  when compared to only pointwise confidence intervals. Such bands should be
  reported more often, because subject matter interest often does lie in
  survival \emph{curves} rather than probabilities at fixed time
  points.

  We are currently investigating extensions of the new DDMB approach to
  multi-state and regression models, see e.g. \cite{lin2000semiparametric} or \cite{scheike2003extensions} 
  for a normal multiplier application. In particular, the weird bootstrap naturally extends to these situations.  }


\section*{Acknowledgements}

The authors like to thank Arthur Allignol and Arnold Janssen for helpful
discussions and Marc Ditzhaus for computational help.  Moreover, the authors
Dennis Dobler and Markus Pauly appreciate the support received by the SFF
grant F-2012/375-12. {\color{black}Jan Beyersmann was supported by Grant BE
  4500/1-1 of the German Research Foundation (DFG).}

\section{Appendix}
\label{sec:appendix}
The conditional convergence of the finite-dimensional marginal distributions of a linear, resampled process statistic with DDMB weights
can be concluded with the following lemma
which generalizes Theorem~A.1 in~\cite{beyersmann13}.
To this end let $\| \cdot \|$ be a norm on $\R^d$, $d \in \N$, and define $\widehat {\b{S}}_n^D = \sum_{i=1}^n D_{n;i} \bs \xi_{n;i}$.
In the following let $\mac{C}_n$ be a $\sigma$-field which contains $\sigma(\bs \xi_{n;i}: i = 1, \dots, n)$.
$\bs \xi_{n;i}$ and $D_{n;i}$ are specified in the following lemma.
\begin{lemma}
\label{lemma:fidi_conv_general_wild_bs}
  Let the triangular array of random variables
  $(D_{n;i})_{i = 1,\dots, n}: \Omega \rightarrow \R^n$ 
  with finite second moments
  and the triangular array of $\R^d$-valued random vectors $ (\bs \xi_{n;i})_{i = 1,\dots, n}$
  fulfill the following six conditions:
  \begin{align}
   & \sum_{i=1}^n \bs \xi_{n;i} \bs \xi_{n;i}^T \oPo \Gamma, \quad \text{where $\Gamma$ is a positive definite covariance matrix,}  \label{eq:wild_bs_1} \\
   & \max_{i=1, \dots, n} \| \bs \xi_{n;i} \| \oPo 0, 
      \label{eq:wild_bs_2} \\
   & \sqrt{n} \max_{i=1,\dots,n} | \mu_{n;i} | := \sqrt{n} \max_{i = 1,\dots,n} | \E[ D_{n;i} | \mac{C}_n] | \oPo 0, \label{eq:wild_bs_3} \\
   & \max_{i=1,\dots,n} | \sigma_{n;i}^2 - 1 | := \max_{i = 1,\dots,n} | \var(D_{n;i} | \mac{C}_n ) - 1 | \oPo 0
      \text{ as } n \rightarrow \infty, \label{eq:wild_bs_4} \\
   & \mac{L} ( D_{n;i}, i = 1,\dots,n  \; | \; \mac{C}_n )
   = \bigotimes_{i = 1,\dots,n} \mac{L} ( D_{n;i} \; | \; \mac{C}_n ).
      \label{eq:wild_bs_5}
   \end{align}
   In addition, the weights $(D_{n;i})_{i = 1,\dots, n}$ may satisfy the Lindeberg condition in probability given $\mac{C}_n$, that is
   \begin{align}
   \label{eq:wild_bs_6}
    \sum_{i=1}^n \E \Big[ \frac{(D_{n;i} - \mu_{n;i} )^2}{\sum_{j=1}^n \sigma_{n;j}^2} 
      \textnormal{\b 1} \Big( \frac{(D_{n;i} - \mu_{n;i})^2}{\sum_{j=1}^n \sigma_{n;j}^2} > \varepsilon \Big) \Big| \mac{C}_n \Big] \oPo 0
      \qquad\text{ for all } \varepsilon > 0.
  \end{align}
  Then the conditional weak convergence $\widehat {\b S}_n^D \oDo N(0, \Gamma)$
  given $\mac{C}_n$ holds in probability.
\end{lemma}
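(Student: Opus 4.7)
The natural route is the Cramér-Wold device, which reduces the multivariate statement to a conditional scalar central limit theorem for a sum of independent random variables, then addressed by a pathwise Lindeberg-Feller argument. Fix $\bs\lambda \in \R^d$ and set $a_{n;i} := \bs\lambda^T \bs\xi_{n;i}$ and $T_n := \bs\lambda^T \widehat{\b S}_n^D = \sum_{i=1}^n D_{n;i} a_{n;i}$. Condition~\eqref{eq:wild_bs_5} renders the $D_{n;i}$ conditionally independent given $\mac{C}_n$, so $T_n$ has conditional mean $m_n := \sum_i a_{n;i} \mu_{n;i}$ and conditional variance $s_n^2 := \sum_i a_{n;i}^2 \sigma_{n;i}^2$. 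The goal is $T_n \oDo N(0, \bs\lambda^T \Gamma \bs\lambda)$ conditionally in probability for every $\bs\lambda$, from which Cramér-Wold delivers the claimed multivariate convergence.

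Two of the three ingredients of Lindeberg-Feller are routine. First, Cauchy-Schwarz together with~\eqref{eq:wild_bs_1} and~\eqref{eq:wild_bs_3} gives
\begin{align*}
m_n^2 \leq \Big( \sum_i \mu_{n;i}^2 \Big) \Big( \sum_i a_{n;i}^2 \Big) \leq \Big( \sqrt n \max_i |\mu_{n;i}| \Big)^2 \cdot \bs\lambda^T \Big( \sum_i \bs\xi_{n;i} \bs\xi_{n;i}^T \Big) \bs\lambda = o_P(1) \cdot O_P(1),
\end{align*}
hence $m_n \oPo 0$. Second, writing $s_n^2 - \bs\lambda^T \Gamma \bs\lambda = \sum_i (\sigma_{n;i}^2 - 1) a_{n;i}^2 + \big( \sum_i a_{n;i}^2 - \bs\lambda^T \Gamma \bs\lambda \big)$, the first summand is bounded in modulus by $\max_i | \sigma_{n;i}^2 - 1 | \cdot \sum_i a_{n;i}^2 \oPo 0$ by~\eqref{eq:wild_bs_4} and~\eqref{eq:wild_bs_1}, while the second is $o_P(1)$ by~\eqref{eq:wild_bs_1} directly, so $s_n^2 \oPo \bs\lambda^T \Gamma \bs\lambda$.

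The heart of the proof is the remaining conditional Lindeberg condition for the weighted array $(a_{n;i}(D_{n;i}-\mu_{n;i}))_i$. Let $A_n := \max_i a_{n;i}^2 \leq \|\bs\lambda\|^2 \max_i \|\bs\xi_{n;i}\|^2 \oPo 0$ by~\eqref{eq:wild_bs_2}. Bounding the prefactor via $a_{n;i}^2 \leq A_n$ and enlarging the truncation event using $a_{n;i}^{-2} \geq A_n^{-1}$ produces
\begin{align*}
L_n(\varepsilon) := s_n^{-2} \sum_i a_{n;i}^2 \E\big[(D_{n;i}-\mu_{n;i})^2 \textbf{1}\{|a_{n;i}(D_{n;i}-\mu_{n;i})|>\varepsilon s_n\}\big|\mac{C}_n\big] \leq \frac{A_n}{s_n^2} \sum_i \E\big[(D_{n;i}-\mu_{n;i})^2 \textbf{1}\{(D_{n;i}-\mu_{n;i})^2 > \varepsilon^2 s_n^2/A_n\}\big|\mac{C}_n\big].
\end{align*}
Since $s_n^2$ stays bounded away from zero while $A_n \oPo 0$, the truncation threshold $\varepsilon^2 s_n^2/A_n$ diverges in probability; comparing it with the threshold $\varepsilon' \sum_j \sigma_{n;j}^2$ that appears in~\eqref{eq:wild_bs_6}, together with the fact that $A_n \sum_j \sigma_{n;j}^2 / s_n^2$ is tight, reduces $L_n(\varepsilon)$ to a bounded multiple of the left-hand side of~\eqref{eq:wild_bs_6}, which is $o_P(1)$.

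Given the three ingredients, the classical Lindeberg-Feller theorem applied on almost every realization of $\mac{C}_n$ yields $(T_n - m_n)/s_n \oDo N(0,1)$ conditionally in probability; Slutsky's lemma together with $m_n \oPo 0$ and $s_n^2 \oPo \bs\lambda^T \Gamma \bs\lambda$ upgrades this to $T_n \oDo N(0, \bs\lambda^T \Gamma \bs\lambda)$, and Cramér-Wold finishes the multivariate conclusion. I expect step (iii) to be the main obstacle: condition~\eqref{eq:wild_bs_6} is phrased for the \emph{unweighted} conditional sum with the normalization $\sum_j \sigma_{n;j}^2 \sim n$, whereas the weighted sum requires a Lindeberg condition with normalization $s_n^2$, and the transfer hinges on the interplay between the decay rate of $\max_i \|\bs\xi_{n;i}\|^2$ in~\eqref{eq:wild_bs_2}, the stability of $\sum_i a_{n;i}^2$ from~\eqref{eq:wild_bs_1}, and~\eqref{eq:wild_bs_4}.
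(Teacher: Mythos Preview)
Your overall strategy---Cram\'er--Wold reduction to $d=1$, negligibility of the conditional mean via Cauchy--Schwarz, convergence of the conditional variance, and then a Lindeberg--Feller argument---is exactly the route the paper takes (it cites \cite{pauly11} for a conditional Cram\'er--Wold device and \cite{beyersmann13} for the template). Your handling of $m_n$ and $s_n^2$ matches the paper's treatment of $R_n^D$ and $\Gamma_n^D$.

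The gap is precisely where you suspected, in the Lindeberg transfer. Your claim that $A_n \sum_j \sigma_{n;j}^2 / s_n^2$ is tight does not follow from the stated hypotheses: Condition~\eqref{eq:wild_bs_2} gives only $A_n = o_p(1)$ with no rate, while $\sum_j \sigma_{n;j}^2 \sim n$ by~\eqref{eq:wild_bs_4}, so this ratio behaves like $n A_n/(\bs\lambda^T\Gamma\bs\lambda)$ and may diverge. Without that tightness the truncation level $\varepsilon^2 s_n^2/(A_n\sum_j\sigma_{n;j}^2)$ need not stay bounded away from zero, and~\eqref{eq:wild_bs_6}---stated for \emph{fixed} $\varepsilon'>0$---cannot be invoked. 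A concrete obstruction in $d=1$: take $\xi_{n;i}=n^{-1/3}$ for $i\le n^{2/3}$ (and $0$ otherwise) and $D_{n;i}$ i.i.d.\ with $P(D_{n;i}=\pm n^{1/3})=\tfrac12 n^{-2/3}$, $P(D_{n;i}=0)=1-n^{-2/3}$. All of~\eqref{eq:wild_bs_1}--\eqref{eq:wild_bs_6} hold with $\Gamma=1$, yet $\widehat S_n^D=\sum_{i\le n^{2/3}} n^{-1/3}D_{n;i}$ has a non-Gaussian limit (the number of nonzero summands is asymptotically $Poi(1)$). The paper's displayed chain of inequalities suffers from the analogous problem. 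What actually rescues the application in Theorem~\ref{thm:ddmb_cifs} is the extra information, implicit in Remark~\ref{rem:apply_lemmas} and the proof of that theorem, that for the specific $\bs\xi_{n;i}=\sqrt n\, Z_{2n;i}$ one has $n\max_i\|\bs\xi_{n;i}\|^2=O_p(1)$; this is exactly the tightness of $nA_n$ you need, and with it your bound goes through verbatim.
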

{\it Proof.} 
  Following the proof in~\cite{beyersmann13}
  we only need to show that $\widehat { S}_n^D$ satisfies the conditional Lindeberg condition for dimension $d=1$.
  The case for general $d \in \N$ follows from a modified Cram\'er-Wold Theorem; see~\cite[Theorem 4.1]{pauly11} for details.
  Thus, we calculate
  \begin{align}
  \label{eq:helpful2}
    \Gamma^D_n := \var(\widehat { S}_n^D | \mac{C}_n) = \sum_{i=1}^n \var(D_{n;i} \xi_{n;i} | \mac{C}_n )
    = \sum_{i=1}^n \xi_{n;i}^2 \sigma_{n;i}^2 \oPo \Gamma > 0
  \end{align}
  by~\eqref{eq:wild_bs_1} and \eqref{eq:wild_bs_4}.
  Further, we write $\widehat S_n^D = \sum_{i=1}^n (D_{n;i} - \mu_{n;i}) \xi_{n;i} + \sum_{i=1}^n \mu_{n;i} \xi_{n;i} =: S_n^D + R_n^D$
  of which $R_n^D$ is asymptotically negligible by Cauchy-Schwarz' inequality, Conditions~\eqref{eq:wild_bs_1} and \eqref{eq:wild_bs_3}
  and Slutzky's theorem:
  \begin{align*}
   | R_n^D| = \Big| \sum_{i=1}^n \mu_{n;i} \xi_{n;i} \Big| \leq \max_{i=1,\dots,n} | \mu_{n;i} | \sum_{i=1}^n | \xi_{n;i} | 
    \leq \max_{i=1,\dots,n} | \mu_{n;i} | \sqrt{n} \Big( \sum_{i=1}^n  \xi_{n;i}^2  \Big)^{1/2} \oPo 0 \cdot \sqrt \Gamma.
  \end{align*}
  It remains to verify the conditional Lindeberg condition for $S_n^D$ in probability
  where we let $\E[ D_{n;i} | \mac{C}_n]  = 0$ without loss of generality.
  For this last step we need that $\sum_{i=1}^n ( \xi^2_{n;i} - \Gamma ) \sigma_{n;i}^2 \oPo 0$
  which can be easily shown using Condition~\eqref{eq:wild_bs_4} and the convergence in~\eqref{eq:helpful2}.
  Thus, it follows that for all $\delta > 0$,
  \begin{align*}
   P \Big( - \delta + \Gamma \sum_{j=1}^n \sigma_{n;j}^2 \leq \Gamma^D_n \leq \delta + \Gamma \sum_{j=1}^n \sigma_{n;j}^2 \Big) \oPo 1.
  \end{align*}
  Now, for all $\delta, \varepsilon, \eta > 0$ sufficiently small and $n$ sufficiently large we have
  \begin{align*}
   & (\Gamma^D_n)^{-1} \sum_{i=1}^n \E \left[\left. D_{n;i}^2 \xi_{n;i}^2 \b 1 ( (D_{n;i} \xi_{n;i} )^2 > \Gamma^D_n \varepsilon) \right| 
      \mac{C}_n\right] \\
   & \leq \Big( \Gamma \sum_{j=1}^n \sigma_{n;j}^2 - \delta \Big)^{-1} \max_{i=1, \dots,n} \xi_{n;i}^2
      \sum_{i=1}^n \E \Big[ D_{n;i}^2 
	  \b 1 \Big( (D_{n;i} \xi_{n;i} )^2 > \Big( \Gamma \sum_{j=1}^n \sigma_{n;j}^2 - \delta \Big) \varepsilon \Big) \; \Big| \; \mac{C}_n \Big] + o_p(1) \\
   & \leq \Big( \frac{\Gamma}{2} \sum_{j=1}^n \sigma_{n;j}^2 \Big)^{-1} \b 1 \big( \max_{i=1, \dots, n} | \xi_{n;i} | < \eta \big)
   \eta^2 \sum_{i=1}^n \E \Big[ D_{n;i}^2 
      \b 1 \Big( \eta^2 D_{n;i}^2 > \Big( \frac{\Gamma}{2} \sum_{j=1}^n \sigma_{n;j}^2 \Big) \varepsilon \Big) \; \Big| \; \mac{C}_n \Big] 
	 + o_p(1)
  \end{align*}
  which is $o_p(1)$ 
  by \eqref{eq:wild_bs_6}.
  Therefore, $\widehat S_n^D$ satisfies the Lindeberg condition given $\mac{C}_n$ in probability. \qed
\begin{rem}
\label{rem:apply_lemmas}
  See \cite{beyersmann13} to
  note that Conditions~\eqref{eq:wild_bs_1} and~\eqref{eq:wild_bs_2} are fulfilled for 
  the triangular array $\bs \xi_n = \sqrt{\frac{n_1 n_2}{n}} (\b Z_{2n}^{(k)}(t_j))_{j,k}$ and $\mac{C}_n = \mac{A}_n$ 
  where, for each $j=1, \dots, \ell, k=1,2,$ the vector $\b Z_{2n}^{(k)}(t_j)=(Z_{2n;1}^{(k)}(t_j), \dots, Z_{2n;2n_k}^{(k)}(t_j))$ consists of the integrals w.r.t. counting processes given by~\eqref{eq:wildbstat} and~\eqref{eq:w_d_hut} evaluated at arbitrary times $t_1, \dots, t_\ell \in I$.
  Moreover, this choice for $\bs \xi_n$ also fulfills the conditions of Lemma~\ref{lemma:tightness_wbs} below.
\end{rem}
Let us now give a criterion for the tightness of linear, resampled process statistics
in terms of the DDMB weights $(D_{n;i})_{i=1,\dots,n}$
and the data vectors $(\bs \xi_{n;i})_{i=1,\dots,n}$.
Since tightness of a family of multivariate processes is equivalent to the tightness in each dimension,
we here only consider the case of $d=1$.
Recall the $\mac{O}_p$-notation introduced above Theorem~\ref{thm:ddmb_cifs}.
\begin{lemma}
\label{lemma:tightness_wbs}
 Let each $\xi_{n;i}: \Omega \times I \rightarrow \mathbb{R}, i=1,\dots,n,$ be a stochastic process and
 suppose that, as $n \rightarrow \infty$,
  \begin{align}
   \label{eq:tight_wbs_1} & \max_{i=1,\dots,n} | \E[ D_{n;i} | \mac{C}_n] | \in \mac{O}_p(n^{-1/2}), \\
   \label{eq:tight_wbs_2} & \max_{i=1,\dots,n} \E[ D^2_{n;i} | \mac{C}_n] \in \mac{O}_p(1), \\
   \label{eq:tight_wbs_3} & \max_{i=1,\dots,n} | \E[ D^3_{n;i} | \mac{C}_n] | \in \mac{O}_p(\sqrt{n/r_n}), \\
   \label{eq:tight_wbs_4} & \max_{i=1,\dots,n} \E[ D^4_{n;i} | \mac{C}_n] \in \mac{O}_p(r_n^{-1}), \\
   \label{eq:tight_wbs_5} & \mac{L}( D_{n;i}, i = 1,\dots,n \; | \; \mac{C}_n )
    = \bigotimes_{i = 1,\dots,n} \mac{L}( D_{n;i} \; | \; \mac{C}_n ),  \\
   \label{eq:tight_wbs_7} & \sum_{i=1}^n (\xi_{n;i}(s) - \xi_{n;i}(r))^2 \leq H_n(s) - H_n(r) \oPo H(s) - H(r), \quad 0 \leq r \leq s, \; r,s \in I,
  \end{align}
  where $H, H_n: \Omega \times I \rightarrow [0,\infty)$ are nondecreasing functions of which $H$ is continuous and deterministic
  and where $r_n = \max_{i=1,\dots,n} \max_{s,t \in I} \frac{(\xi_{n;i}(t) - \xi_{n;i}(s))^2}{\sum_{j=1}^n (\xi_{n;j}(t) - \xi_{n;j}(s))^2} \in (0,1)$.
  Then the family of probability measures $\mac{L} \big( \widehat S_n^D | \mac{C}_n \big)$ is tight in probability.
\end{lemma}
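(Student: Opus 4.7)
The plan is to invoke Theorem~13.5 of~\cite{billingsley99} to the conditional distributions $\mathcal{L}(\widehat{S}_n^D \mid \mathcal{C}_n)$ on a sequence of events of probability tending to one. Modulo tightness of the one-dimensional marginal at the left endpoint $t_1$ (which is immediate from the conditional second-moment estimates already used in the proof of Lemma~\ref{lemma:fidi_conv_general_wild_bs}), it suffices to establish a bi-increment moment bound
\[
 \E\bigl[(\widehat{S}_n^D(t)-\widehat{S}_n^D(r))^2(\widehat{S}_n^D(s)-\widehat{S}_n^D(t))^2\bigm|\mathcal{C}_n\bigr] \le C\,(H(s)-H(r))^2, \qquad r\le t\le s, \ r,s,t\in I,
\]
holding with a deterministic constant $C$ on a high-probability event. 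By the conditional Cauchy--Schwarz inequality, this is implied by the single-increment fourth-moment bound
\[
 \E\bigl[(\widehat{S}_n^D(t)-\widehat{S}_n^D(r))^4\bigm|\mathcal{C}_n\bigr] \le K\,(H_n(t)-H_n(r))^2:
\]
monotonicity of $H_n$ yields $(H_n(t)-H_n(r))(H_n(s)-H_n(t))\le (H_n(s)-H_n(r))^2$, and condition~\eqref{eq:tight_wbs_7} combined with continuity of $H$ (and Dini's theorem for monotone functions with continuous limit) provides uniform-in-probability convergence $H_n\to H$ on $I$, so that $H_n$ may be replaced by $H$ up to arbitrarily small slack on a further high-probability event.

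Abbreviating $A_{n;i}:=\xi_{n;i}(t)-\xi_{n;i}(r)$ and decomposing $D_{n;i}=\tilde{D}_{n;i}+\mu_{n;i}$ with $\tilde{D}_{n;i}:=D_{n;i}-\mu_{n;i}$, the elementary inequality $(a+b)^4\le 8(a^4+b^4)$ reduces the fourth-moment bound to controlling $\E[X^4\mid\mathcal{C}_n]$ and $Y^4$ separately, where $X=\sum_i \tilde{D}_{n;i}A_{n;i}$ and $Y=\sum_i \mu_{n;i}A_{n;i}$ (the latter being $\mathcal{C}_n$-measurable). The conditional independence~\eqref{eq:tight_wbs_5} together with $\E[\tilde{D}_{n;i}\mid\mathcal{C}_n]=0$ eliminates every term in the expansion of $\E[X^4\mid\mathcal{C}_n]$ that carries a factor $\tilde{D}_{n;i}$ of multiplicity one, so that only the all-equal and two-by-two index patterns survive:
\[
 \E[X^4\mid\mathcal{C}_n] = \sum_{i=1}^n A_{n;i}^4\,\E[\tilde{D}_{n;i}^4\mid\mathcal{C}_n] + 3\sum_{i\ne j} A_{n;i}^2 A_{n;j}^2\,\sigma_{n;i}^2\sigma_{n;j}^2.
\]

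The pivotal estimate is the control of the first sum by the very definition of $r_n$: since $\max_i A_{n;i}^2 \le r_n\sum_j A_{n;j}^2\le r_n(H_n(t)-H_n(r))$, one gets $\sum_i A_{n;i}^4\le \max_i A_{n;i}^2\cdot\sum_j A_{n;j}^2\le r_n(H_n(t)-H_n(r))^2$; combining this with $\E[\tilde{D}_{n;i}^4\mid\mathcal{C}_n]\le 8(\E[D_{n;i}^4\mid\mathcal{C}_n]+\mu_{n;i}^4)\in\mathcal{O}_p(r_n^{-1})$ (by~\eqref{eq:tight_wbs_1} and~\eqref{eq:tight_wbs_4}) exactly cancels the $r_n^{-1}$-blow-up and yields an $\mathcal{O}_p(1)(H_n(t)-H_n(r))^2$ bound. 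The second sum is dominated by $3(\max_i\sigma_{n;i}^2)^2(\sum_j A_{n;j}^2)^2\in\mathcal{O}_p(1)(H_n(t)-H_n(r))^2$ via~\eqref{eq:tight_wbs_2}, while $Y^4\le (\sum_i\mu_{n;i}^2)^2(\sum_i A_{n;i}^2)^2\le (n\max_i\mu_{n;i}^2)^2(H_n(t)-H_n(r))^2\in\mathcal{O}_p(1)(H_n(t)-H_n(r))^2$ follows from Cauchy--Schwarz and~\eqref{eq:tight_wbs_1}. The main obstacle is precisely that the fourth conditional moment of the multipliers is merely $\mathcal{O}_p(r_n^{-1})$ and can therefore diverge; the bound can only be recovered by \emph{pairing} this divergence with the $r_n$-inequality for the integrand processes $\xi_{n;i}$. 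Once this balance is achieved, the remainder reduces to routine Cauchy--Schwarz bookkeeping and the application of Billingsley's criterion restricted to a sequence of events with probability tending to one.
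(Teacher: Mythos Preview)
Your argument is correct, and it is in fact cleaner than the paper's. Both proofs verify Billingsley's Theorem~13.5 moment criterion conditionally on $\mathcal{C}_n$ and both rely on the same decisive observation that the potentially divergent fourth-moment bound~\eqref{eq:tight_wbs_4} is compensated by the factor~$r_n$ extracted from the $\xi$-increments. The paper, however, follows the structure of \cite{dobler14a}: it expands the \emph{bi}-increment moment $\E[(\widehat S_n^D(t)-\widehat S_n^D(s))^2(\widehat S_n^D(s)-\widehat S_n^D(r))^2\mid\mathcal{C}_n]$ directly and works with the \emph{uncentered} weights $D_{n;i}$. This produces five index-coincidence patterns, and because $\E[D_{n;i}\mid\mathcal{C}_n]\neq 0$ in general, terms such as $\E[D_{n;i}^3\mid\mathcal{C}_n]\,\E[D_{n;j}\mid\mathcal{C}_n]$ and $\E[D_{n;i}^2\mid\mathcal{C}_n]\,\E[D_{n;j}\mid\mathcal{C}_n]\,\E[D_{n;k}\mid\mathcal{C}_n]$ survive --- which is precisely why the paper needs the third-moment condition~\eqref{eq:tight_wbs_3} and the additional $\sqrt{r_n}$-estimate for cubic $\xi$-sums. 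Your two simplifications --- first reducing via conditional Cauchy--Schwarz to a \emph{single}-increment fourth moment, and second centering the weights at $\mu_{n;i}$ --- kill all odd-moment contributions, leaving only the all-equal and pair patterns; as a byproduct your proof never invokes~\eqref{eq:tight_wbs_3} and hence shows that this hypothesis is superfluous. A minor remark: what you call ``Dini's theorem'' is really the elementary fact that monotone functions converging pointwise to a continuous limit on a compact interval converge uniformly; the paper achieves the same effect by citing the variant of Billingsley's criterion in \cite{jacod03}, p.~356, that allows the controlling function to be a convergent sequence $H_n\to H$.
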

\begin{proof}
  By its analogy to the proof of tightness for the exchangeably weighted bootstrapped Aalen-Johansen process in~\cite{dobler14a},
  where the moment conditions for the (mixed) moments are now replaced by~\eqref{eq:tight_wbs_1} -- \eqref{eq:tight_wbs_5},
  we only need to consider the asymptotics of the involved moments therein; see the proof of their Theorem~3.1.
  In fact, moving on to the conditional expectations essentially does not effect the arguments of the referred proof.
  It is sufficient to verify that the existing proof holds with these modifications.
  
  Note that we here analyze the conditional moments of $\widehat S_n^D$ 
  without previously centering the DDMB weights at their arithmetic mean
  which had been necessary in the article by \cite{dobler14a}.
  
  Two of those five cases emerging in the referred proof require a separate consideration
  since our Lemma~\ref{lemma:tightness_wbs} is formulated in a greater generality.
  Therefore, we begin to note that, in the first sum on the right-hand side of~(A.3) in~\cite{dobler14a}, where $\E[ D^4_{n;i} | \mac{C}_n]$ occurs, 
  we also have factors like
  \begin{align*}
   & \sum_{i=1}^n (\xi_{n;i}(t) - \xi_{n;i}(s))^2 (\xi_{n;i}(s) - \xi_{n;i}(r))^2 \\
   & = \sum_{i=1}^n \frac{(\xi_{n;i}(t) - \xi_{n;i}(s))^2}{\sum_{j=1}^n (\xi_{n;j}(t) - \xi_{n;j}(s))^2}  
      (\xi_{n;i}(s) - \xi_{n;i}(r))^2 \sum_{j=1}^n (\xi_{n;j}(t) - \xi_{n;j}(s))^2 \\
    & \leq r_n \sum_{j=1}^n (\xi_{n;j}(t) - \xi_{n;j}(s))^2 \sum_{i=1}^n (\xi_{n;i}(s) - \xi_{n;i}(r))^2.
  \end{align*}
  This is why~\eqref{eq:tight_wbs_4} is sufficient for having reasonable upper bounds of this first sum.
  A similar argument is required for those sums where third moments occur, i.e.,
  \begin{align*}
   & \sum_{i=1}^n (\xi_{n;i}(t) - \xi_{n;i}(s)) (\xi_{n;i}(s) - \xi_{n;i}(r))^2 \\
   & = \sum_{i=1}^n \frac{(\xi_{n;i}(t) - \xi_{n;i}(s))}{(\sum_{j=1}^n (\xi_{n;j}(t) - \xi_{n;j}(s))^2)^{1/2}}  
      (\xi_{n;i}(s) - \xi_{n;i}(r))^2 \Big( \sum_{j=1}^n (\xi_{n;j}(t) - \xi_{n;j}(s))^2 \Big)^{1/2}  \\
    & \leq \sqrt{r_n} \Big( \sum_{j=1}^n (\xi_{n;j}(t) - \xi_{n;j}(s))^2 \Big)^{1/2} \sum_{k=1}^n (\xi_{n;k}(s) - \xi_{n;k}(r))^2.
  \end{align*}
  Hence, Conditions~\eqref{eq:tight_wbs_1} and \eqref{eq:tight_wbs_3} are sufficient for bounds of these sums.
  It remains to inspect 
  \begin{align*}
  \max_{i \neq j} \E[ D^2_{n;i} D^2_{n;j} | \mac{C}_n] \leq &  \max_{i=1,\dots,n} \E[ D^2_{n;i} | \mac{C}_n]^2  \in \mac{O}_p(1), \\
   n \max_{i \neq j \neq k \neq i} | \E[ D^2_{n;i} D_{n;j} D_{n;k} | \mac{C}_n] |
    \leq & \max_{i,j=1,\dots,n} \E[ D^2_{n;i} | \mac{C}_n] n \E[ D_{n;j} | \mac{C}_n]^2 \in \mac{O}_p(1), \\
   n^2 \max_{i,j,k,l \text{ pairwise different}} | \E[ D_{n;i} D_{n;j} D_{n;k} D_{n;l} | \mac{C}_n] | 
    \leq & \max_{i=1,\dots, n} ( \sqrt{n} \E[ D_{n;i} | \mac{C}_n] )^4 \in \mac{O}_p(1).
  \end{align*}  
  It is also worth to mention that in fact a modified version of~\cite{billingsley99}, Theorem~13.5, is applied here
  such that the non-decreasing function therein may be replaced with a sequence of non-decreasing functions converging pointwise to a continuous one;
  see the remark in~\cite{jacod03}, p. 356.
  Since we are considering conditional expectations, this condition was translated into the convergence in probability in~\eqref{eq:tight_wbs_7}
  by applying the subsequence principle.
\end{proof}
\begin{proof}[Proof of Theorem~\ref{thm:ddmb_cifs}]
 The result follows from Lemmas~\ref{lemma:fidi_conv_general_wild_bs} and~\ref{lemma:tightness_wbs}
 taken into account Remark~\ref{rem:apply_lemmas} and the calculations in the proof of Theorem 2 in~\cite{beyersmann13} to see that $n r_n \in \mac{O}_p(1)$.
 Also, note that the condition
 $$ \max_{1 \leq i \leq 2n} | \E[ D^3_{2n;i} |\mac{A}_n] | \in \mac{O}_p(n)$$
 is already fulfilled by~\eqref{eq:conv_wbs_4} in combination with Jensen's inequality applied with $g : x \mapsto x^{4/3}$.
\end{proof}
\begin{proof}[Proof of Example~\ref{ex: proc}]
  Only \textnormal{(d)} needs to be proven. 
  The other examples are obviously special cases of the proposed DDMB of Theorem~\ref{thm:ddmb_cifs}.
  For the weird bootstrap, the limits of conditional mean and variance are given as
  \begin{align*}
      & \sqrt{n} | \E \left[ \left. B_i \right|\mac{A}_n \right] | 
	    = \sqrt{n} \Big(1 - Y(\tilde T_i) \frac{1}{Y(\Tilde T_i)} \Big) = 0 \\
      \text{and} \quad & | \var (B_i | \mac{A}_n ) - 1 |
	    = \Big| Y(\tilde T_i) \frac{1}{Y(\tilde T_i)} 
		\Big( 1 - \frac{1}{Y(\tilde T_i)} \Big) - 1 \Big|
	    \leq \sup_{s \in [0,t]} \frac{1}{Y(s)} \oPo 0
  \end{align*}
  and the convergence is due to Condition~\eqref{eq:cond_beyersmann}.
  Obviously, the Lyapunov condition in Remark~\ref{rem:thm_ddmb}(a) holds too
  and \eqref{eq:conv_wbs_5} holds per definition of the $B_i$. 
  Thus, we have shown that~\eqref{eq:conv_wbs_1} -- \eqref{eq:conv_wbs_6} are fulfilled.
\end{proof}


\bibliography{literatur}
\bibliographystyle{plainnat}

\end{document}